\documentclass[oneside,11pt]{article}
\usepackage{caption}
\usepackage{blindtext}

\usepackage[abbrvbib, preprint]{jcustom}

\usepackage{lastpage}

\usepackage[utf8]{inputenc} 
\usepackage[T1]{fontenc}           
\usepackage{booktabs} 
\usepackage{hyperref}
\usepackage{enumitem}
\usepackage{algorithm}
\usepackage{algorithmicx}
\usepackage{algpseudocode}

\hypersetup{
    colorlinks=true,
    allcolors=blue
}

\usepackage{amsfonts,amsmath,amsthm,amssymb}       
\usepackage{nicefrac}       
\usepackage{microtype}      
\usepackage{xcolor}         
\usepackage{pifont}
\usepackage{tikz}
\usepackage{subcaption}

\usetikzlibrary{arrows.meta}
\usepackage{wrapfig}
\usepackage[presets={vec-cev}]{letterswitharrows}
\usepackage{cleveref}
\AddToHook{cmd/appendix/before}{
    \crefalias{section}{appendix}%
    \crefalias{subsection}{appendix}
}
\usepackage{graphicx}

\newtheorem{proposition}{Proposition}
\newtheorem{lemma}{Lemma}
\theoremstyle{remark}
\newtheorem{remark}{Remark}

\definecolor{src}{RGB}{76,114,176}
\definecolor{tgt}{RGB}{221,132,82}

\tikzset{
  traj/.style={black!75, line width=0.9pt, -{Latex[length=1.8mm]}},
  noisytraj/.style={black!70, line width=0.8pt, -{Latex[length=1.6mm]}, opacity=0.75},
  margsrc/.style={fill=src!12, draw=src!55, line width=0.5pt},
  margtgt/.style={fill=tgt!12, draw=tgt!55, line width=0.5pt},
  sp/.style={fill=src, draw=src},
  tp/.style={fill=tgt, draw=tgt},
}

\theoremstyle{thmstyleone}
\newtheorem{theorem}{Theorem}[section]

\theoremstyle{thmstylethree}
\newtheorem{assumption}[theorem]{Assumption}

\theoremstyle{thmstyletwo}

\crefname{assumption}{assumption}{assumptions}
\Crefname{assumption}{Assumption}{Assumptions}

\newcommand{\R}{\mathbb{R}}
\newcommand{\ip}[2]{\langle #1,#2\rangle}
\newcommand{\abs}[1]{\lvert #1\rvert}
\newcommand{\norm}[1]{\lVert #1\rVert}
\newcommand{\mcE}{\mathcal{E}}
\newcommand{\mcR}{\mathcal{R}}
\newcommand{\mcJ}{\mathcal{J}}
\newcommand{\mcS}{\mathcal{S}}
\newcommand*\dif{\mathop{}\!\mathrm{d}}
\newcommand{\dd}{\mathrm{d}}

\title{Dirac--Frenkel dynamics with inertia for nonlinearly parametrized solutions of evolution problems}

\usepackage{sidecap}
\sidecaptionvpos{figure}{c}  

\author{%
  \name Matteo Raviola \\
  \addr Scientific Computing and Uncertainty Quantification - CADMOS Chair, EPFL\\
  \AND
  Benjamin Peherstorfer\\
   \addr Courant Institute of Mathematical Sciences, New York University\\
}

\newcommand{\tc}[1]{#1}

\begin{document}

\maketitle
\begin{abstract}
Even when Dirac--Frenkel dynamics determine a well-defined evolution in function space, the corresponding parameter dynamics can be non-unique or ill-conditioned for redundant nonlinear parametrizations, such as typical neural networks or mixture models. We propose to add inertia to the Dirac--Frenkel dynamics and show that this allows useful parameter velocity information to persist from the past trajectory in directions that are weakly informed, while well-informed parameter velocity directions continue to follow the Dirac--Frenkel dynamics. We prove that the inertial formulation yields well-posed parameter dynamics and provide a posteriori error bounds. After time discretization, the method requires the solution of the same type of regularized linear least-squares problem as standard Dirac--Frenkel dynamics, but with the previous velocity appearing as an anchor. Numerical experiments demonstrate the increased robustness obtained with inertia.
\end{abstract}

\section{Introduction}
Let
\begin{equation}
      \dot u(t)=F(u(t)),
      \qquad u(0)=u^0,
      \label{eq:ode_intro}
\end{equation}
be an evolution problem on a Hilbert space $H$. We approximate the solution by a
nonlinear parametrization
\begin{equation}
      \hat u(t)=\Phi(\theta(t)),
      \qquad \theta(t)\in \Theta,
      \label{eq:param_intro}
\end{equation}
where $\Theta$ is a $p$-dimensional Hilbert space and $\Phi:\Theta\to H$. For example, $\Phi$ could be a neural network and $\theta(t)$ the weights of the neural network. The Dirac--Frenkel variational principle determines a parameter velocity $\dot{\theta}(t)$ by asking
that the tangent vector $J(\theta(t))\dot\theta(t)$, with $J(\theta(t))w:=D\Phi(\theta(t))[w]$, be the best instantaneous
approximation of the vector field $F(\Phi(\theta(t)))$ in $H$.
This point of view is classical in quantum dynamics and dynamical low-rank
approximation \cite{Dirac1930,Frenkel1934,Lubich2008,KochLubich2007,Haegeman2011} and it has become increasingly relevant for nonlinear
parametrizations given by neural networks such as Neural Galerkin schemes \cite{bruna_neural_2024} and other techniques \cite{PhysRevE.104.045303,anderson2021evolution,wang2021particle,doi:10.1137/22M1529427,finzi_stable_2023,Feischl2024,lubich2025regularizeddynamicalparametricapproximation,10.1051proc202581002,KAST2024112986,24OTDDTO,dahmen2025expansivenaturalneuralgradient,bon2025stablenonlineardynamicalapproximation,hesthaven2026nonlinearmodelreductiontransportdominated}.

A central difficulty is that the Dirac--Frenkel principle determines the function-space tangent vector $J(\theta(t))\dot\theta(t)$, but the corresponding parameter velocity $\dot\theta(t)$ need not be unique or well conditioned. For redundant parametrizations, such as neural networks or Gaussian mixtures, the Jacobian $J(\theta(t))$ may have a nontrivial kernel, so different parameter velocities $\dot{\theta}(t)$ produce the same function-space velocity $J(\theta(t))\dot{\theta}(t)$. Even if the Jacobian is full rank, small singular values of $J(\theta(t))$ make the instantaneous problem of fitting $F(\Phi(\theta(t)))$ by a tangent vector $J(\theta(t))\dot\theta(t)$ ill-conditioned. Components of the desired tangent vector that can be matched only through small singular directions of $J(\theta(t))$ then require large, unstable coefficients in the parameter velocity. Thus, while the evolution in function space may be well determined,  the induced dynamics in parameter space are non-unique or ill-conditioned. This phenomenon is called tangent space collapse \cite{24OTDDTO} or matrix singularity issue \cite{KAY1989165} and is widely recognized; see, e.g., \cite{9073ba01-c8c8-3f30-b15c-e4b52a44e9da,berman2023randomized,finzi_stable_2023,Feischl2024,pmlr-v235-chen24ad}.

One remedy is to regularize the instantaneous Dirac--Frenkel problem so that a unique parameter velocity is selected.  The work \cite{Feischl2024} provides a detailed analysis of Tikhonov-regularized Dirac--Frenkel dynamics. Tikhonov regularization and related truncated singular-value rules select a velocity by damping or removing directions that are weakly informed by the Jacobian $J(\theta(t))$ at the current parameter $\theta(t)$. Another line of work builds on randomization to use partial or compressed information from the current Jacobian. For example, one may evaluate the Jacobian action $J(\theta(t))v$ only
at randomly sampled spatial points, or apply a random sketch matrix before solving for the velocity \cite{berman2023randomized,lam2024randomizedlowrankrungekuttamethods,dong2025randomizedtimesteppingnonlinearly,CARREL2026114421}.
These randomized methods can reduce the cost of forming and solving the least-squares problem, and in some cases improve conditioning.
Other works propose to retrain the neural network \cite{finzi_stable_2023} or use different dynamics from Dirac--Frenkel dynamics \cite{kvaal2023need,pmlr-v202-chen23af,24OTDDTO,pmlr-v235-chen24ad}. In most of these approaches, the velocity is still based on the current state and Jacobian only, through a regularizer, truncation, randomization rule, or modified local problem.
Closest to our work is the Dirac--Frenkel--Onsager approach of \cite{RP26DFO}, which interprets the non-uniqueness as gauge freedom. That method preserves the instantaneous Dirac--Frenkel residual minimization and uses an additional Onsager-type variable to select how the parameters move in nullspace directions.

The present paper takes a different route. We introduce Dirac--Frenkel dynamics with inertia (DFI), which replace the instantaneous selection of a parameter velocity by an evolution equation for the velocity itself. The velocity is driven toward satisfying the current Dirac--Frenkel condition, but it is not forced to forget its past at every instant (time step).
This distinguishes DFI from the Dirac--Frenkel--Onsager approach of \cite{RP26DFO}, which preserves the instantaneous Dirac--Frenkel residual minimization and uses history only to resolve the gauge freedom in nullspace directions. DFI instead applies inertia to the full parameter velocity, analogous to how momentum is used in optimization \cite{POLYAK19641,doi:10.1142/S0219199700000025,pmlr-v202-muller23b,GOLDSHLAGER2024113351,guzman-cordero2025improving,nouy2026naturalgradientdescentmomentum}. As a result, DFI behaves like the usual Dirac--Frenkel dynamics in directions that are well informed by the current Jacobian, while dynamics given by the inertia dominate in directions that are weakly informed, whose instantaneous correction is strongly regularized, or directions seen only through sketched information. \tc{We show that, once the initial condition is fixed, DFI yields a well-defined evolution in parameter space. Its use of past trajectory information provides a mechanism for robustness when the current least-squares problem is weakly informative.}
After time discretization, a DFI time step leads to a previous-velocity-anchored least-squares solve, which requires the same type of regularized least-squares solve as standard regularized Dirac--Frenkel methods, with the previous velocity appearing as an anchor.
      \tc{We provide a posteriori error bounds for the time-discrete scheme. Furthermore, we argue that temporal coherence of the parameter velocities can make DFI compatible with stronger effective regularization that leads to potentially cheaper instantaneous least-squares solves with, e.g., more aggressively sketched solvers. The experiments indeed demonstrate that DFI remains accurate in strongly regularized and sketched regimes.}

\section{Preliminaries and problem formulation}
\label{sec:setting}
We briefly recap the Dirac--Frenkel variational principle and discuss why parameter dynamics can be underdetermined. 

\subsection{Dirac--Frenkel variational principle}
We work with a real Hilbert state space $H$ and a $p$-dimensional real Hilbert parameter space $\Theta$.
Let $\Phi:\Theta\to H$ be of class $C^2$, let $F:H\to H$ be the vector field of \eqref{eq:ode_intro}, and write $\hat u(t)=\Phi(\theta(t))$ along a trajectory.
In the following, when discussing the instantaneous Dirac--Frenkel problem at a fixed parameter $\theta$, we suppress the time argument and, for $\theta\in \Theta$, set
        \begin{equation}
          \hat u=\Phi(\theta),
          \qquad J(\theta)w:=D\Phi(\theta)[w]\quad(w\in \Theta),
          \qquad f(\theta):=F(\hat u)\in H.
          \label{eq:Jf_def}
        \end{equation}
Note that since $\Theta$ is a finite-dimensional Hilbert vector space, we identify each tangent space
with $\Theta$. Thus parameter velocities  are in  $\Theta$.

At fixed $\theta$, define the function-space Dirac--Frenkel defect by
        \begin{equation}
          \mathcal{E}(\dot{\hat u})
          :=\frac12\norm{\dot{\hat u} - F(\hat u)}_H^2.
          \label{eq:DF_function_level}
        \end{equation}
        The Dirac--Frenkel principle selects a velocity by minimizing \eqref{eq:DF_function_level} over all velocities $\dot{\hat u}$ in the range $\mathrm{Im} (D\Phi(\theta))$, which is the tangent space at the representative $\theta$ under suitable regularity assumptions. 
At the parameter level, using the chain rule $\dot{\hat u}=J(\theta)\dot \theta$, this corresponds to selecting the parameter velocity $\dot \theta$ 
        \begin{equation}
          \dot \theta \in \arg\min_{v\in \Theta} \mathcal{E}_\theta(v)
          \label{eq:DF_parameter_level}
        \end{equation}
        by minimizing the parameter-space Dirac--Frenkel defect
        \begin{equation}\label{eq:DFDefect}
        \mathcal{E}_\theta(v)
    :=\mathcal{E}(J(\theta)v)
          =\frac12\norm{J(\theta)v-f(\theta)}_H^2\,.
        \end{equation}

\subsection{Non-unique or ill-conditioned parameter dynamics}\label{sec:ProblemFormulation}
We now discuss that the Dirac--Frenkel dynamics impose dynamics in function space that can lead to underdetermined parameter dynamics. 
\subsubsection{Non-unique parameter dynamics}
For $\theta\in \Theta$, let
        \begin{equation}
          \mcS_0(\theta):=\arg\min_{v\in \Theta}\mathcal{E}_\theta(v).
          \label{eq:S0_def}
        \end{equation}
        All elements of $\mcS_0(\theta)$ are (unregularized) Dirac--Frenkel velocities at $\theta$ that minimize the defect \eqref{eq:DFDefect}. If $\bar v(\theta)\in\mcS_0(\theta)$ is any fixed minimizer, then
        \begin{equation}
          \mcS_0(\theta)=\bar v(\theta)+\ker J(\theta)=\bar v(\theta)+\ker\bigl(J(\theta)^*J(\theta)\bigr).
          \label{eq:gauge_freedom}
        \end{equation}
        Thus, the ambiguity lies  in  directions of the velocity that do not change the tangent vector in $H$.

  \subsubsection{Tikhonov-regularized Dirac--Frenkel dynamics} One way to define a unique velocity is to replace the unregularized Dirac--Frenkel dynamics with their Tikhonov-regularized counterpart. The work \cite{Feischl2024}  introduces the Tikhonov-regularized Dirac--Frenkel functional
        \begin{equation}
          \mcE_\theta^\varepsilon(v) :=\frac12\norm{J(\theta)v-f(\theta)}_H^2+\frac{\varepsilon^2}{2}\norm{v}_\Theta^2,
          \qquad v\in \Theta,
          \label{eq:Etheta}
        \end{equation}
        with regularization parameter $\varepsilon>0$, and sets $\dot{\theta} = \bar{v}_{\varepsilon}(\theta)$ with $\bar{v}_{\varepsilon}(\theta)$ given by 
        \begin{equation}
          \bar{v}_{\varepsilon}(\theta) =\arg\min_{v\in \Theta}\mcE_{\theta}^\varepsilon(v).\label{eq:TDFVelocity}
        \end{equation}
  This yields the regularized Dirac--Frenkel dynamics with 
        \begin{equation}
          \dot{\theta} = \bar{v}_{\varepsilon}(\theta)\,,\qquad \bar{v}_{\varepsilon}(\theta) =M_{\varepsilon}(\theta)^{-1}g(\theta),
          \label{eq:continuous_df_eps}
        \end{equation}
        where
        \begin{equation}
          M_{\varepsilon}(\theta):=J(\theta)^*J(\theta)+\varepsilon^2 I,
          \quad g(\theta):=J(\theta)^*f(\theta)=J(\theta)^*F(\Phi(\theta))\,,
          \label{eq:M_eps_and_force_map}
        \end{equation}
with the adjoint $J(\theta)^*$ of $J(\theta)$ with respect to the inner products of $H$ and $\Theta$. 

\subsubsection{Tikhonov regularization is a  shrinkage rule} To see how Tikhonov regularization acts on parameter directions that are weakly or not at all informed by the Jacobian, freeze $\theta$ and let  $q_i\in\Theta$ be right singular vectors of $J(\theta)$ with singular values $\sigma_i\ge 0$. Let us now consider the velocity in these singular directions by setting
\[
  \bar{v}_i^\varepsilon:=\ip{\bar v_\varepsilon(\theta)}{q_i}_\Theta,
  \qquad
  g_i:=\ip{g(\theta)}{q_i}_\Theta .
\]
The Tikhonov solution \eqref{eq:continuous_df_eps} 
decouples in this singular-vector basis as
\begin{equation}\label{eq:Prelim:ProbForm:InstanSelectRule}
  (\sigma_i^2+\varepsilon^2)\bar{v}_i^\varepsilon=g_i .
\end{equation}
For $\sigma_i>0$, with the left-singular vector $J(\theta)q_i=\sigma_i u_i$, this gives
\begin{equation}\label{eq:Prelim:ProbForm:DampedRuleABC}
  \bar{v}_i^\varepsilon
  =
  \frac{\sigma_i}{\sigma_i^2+\varepsilon^2}
  \ip{u_i}{f(\theta)}_H
  =
  \frac{\sigma_i^2}{\sigma_i^2+\varepsilon^2}
  \underbrace{\left(
    \frac{1}{\sigma_i}\ip{u_i}{f(\theta)}_H
  \right)}_{\bar{v}_i^0}.
\end{equation}
Thus each positive singular direction is damped relative to the unregularized
pseudoinverse coefficient $\bar{v}_i^0$ by the factor $\sigma_i^2/({\sigma_i^2+\varepsilon^2})$. 
This damping is strongest for small singular values, precisely the directions that are only weakly informed by the current Jacobian. In exact null directions, $q_i\in\ker J(\theta)$, so $\sigma_i=0$, and the forcing coefficient satisfies
\[
  g_i
  =
  \ip{J(\theta)^*f(\theta)}{q_i}_\Theta
  =
  \ip{f(\theta)}{J(\theta)q_i}_H
  =
  0,
\]
so that the scalar Tikhonov equation \eqref{eq:Prelim:ProbForm:InstanSelectRule} reduces to
\begin{equation}\label{eq:Prelim:ProbForm:HelperEps0}
  \varepsilon^2 \bar{v}_i^\varepsilon=0\,,
\end{equation}
because $\varepsilon > 0$. Thus, the Tikhonov-selected velocity has zero orthogonal projection onto $\ker J(\theta)$. 
In this sense, Tikhonov regularization is a static, instantaneous selection rule: it damps every positive singular direction, most strongly the weakly informed directions associated with small singular values, and sets exact nullspace components to zero. 

A truncated pseudoinverse used in, e.g., \cite{bruna_neural_2024} is even more abrupt. Directions whose singular values fall below the truncation threshold are simply ignored and their velocity components are set to zero. 

In summary, both mechanisms---Tikhonov regularization and truncated singular value decomposition (SVD)---avoid the parameter non-uniqueness by suppressing velocity directions that are not sufficiently visible to the current Jacobian. In particular, damping occurs through an instantaneous shrink-or-delete rule so that Tikhonov and truncated SVD regularization are  instantaneous shrinkage rules in parameter space.

\section{Dirac--Frenkel dynamics with inertia}
We now introduce Dirac--Frenkel dynamics with inertia (DFI). Through inertia, the parameter velocity carries information from the past of the trajectory, so motion can persist in parameter directions associated with zero or small singular values of the instantaneous Jacobian.

\subsection{Dirac--Frenkel dynamics with inertia}
\label{sec:dfo}
For fixed $\theta\in \Theta$ and current velocity $\dot \theta \in \Theta$, we choose an acceleration $\ddot \theta \in \Theta$ by an Onsager-type variational principle applied to the unregularized defect energy \eqref{eq:DFDefect}, namely
        \begin{equation}
          \ddot \theta\in\arg\min_{a\in \Theta}\mcR^0_{\theta,\dot\theta}(a):=\frac{\tau^2}{2}\norm{a}_\Theta^2+\ip{\nabla \mathcal{E}_\theta(\dot\theta)}{a}_\Theta,
          \qquad \tau>0\,,
          \label{eq:Onsager_principle_unreg}
        \end{equation}
where $\nabla \mcE_\theta$ denotes the gradient with respect to the Hilbert-space inner product on $\Theta$. 
The above selects an acceleration $\ddot \theta$ that minimizes the sum of two terms: a quadratic penalty on acceleration, which prevents rapid changes in the velocity, and the rate of change of the unregularized Dirac--Frenkel defect energy in the direction of the acceleration, which promotes minimization of the defect energy. 
The parameter $\tau^2$ plays the role of a mass in parameter space, which prevents the parameter velocity from changing abruptly.

 Because $\mcR^0_{\theta,\dot\theta}$ is strictly convex, the minimizer is unique and solves
        \begin{equation}
          \tau^2 \ddot \theta+J(\theta)^*\bigl(J(\theta)\dot\theta-f(\theta)\bigr)=0.
          \label{eq:Onsager_stationary_unreg}
        \end{equation}
   We therefore obtain the  DFI system
        \begin{equation}
          \dot\theta=v,
          \qquad
          \tau^2\dot v+J(\theta)^*J(\theta)v=J(\theta)^*f(\theta).
          \label{eq:continuous_sdfr_unreg}
        \end{equation}

\subsection{Tikhonov-regularized Dirac--Frenkel dynamics with inertia}
\label{sec:dfo_eps}
The same inertial mechanism can be combined with the Tikhonov-regularized Dirac--Frenkel functional \eqref{eq:Etheta}.
 For fixed $\theta\in \Theta$ and current velocity $\dot{\theta}\in \Theta$, we choose an acceleration $\ddot{\theta}\in \Theta$ by Onsager's principle applied to \eqref{eq:Etheta}:
        \begin{equation}
          \ddot{\theta} \in\arg\min_{a\in \Theta}\mcR^{\varepsilon}_{\theta,\dot{\theta}}(a):=\frac{\tau^2}{2}\norm{a}_\Theta^2+\ip{\nabla_{\dot{\theta}}\mcE_\theta^\varepsilon(\dot{\theta})}{a}_\Theta,
          \qquad \tau>0.
          \label{eq:Onsager_principle}
        \end{equation}
  Because $\mcR^{\varepsilon}_{\theta,\dot{\theta}}$ is strictly convex, the minimizer is unique and is characterized by
        \begin{equation}
          \tau^2 \ddot{\theta}+\nabla_{\dot{\theta}}\mcE_\theta^\varepsilon(\dot{\theta})=0.
          \label{eq:Onsager_stationary}
        \end{equation}
 Using
        \begin{equation}
          \nabla\mcE_\theta^\varepsilon(v)=J(\theta)^*\bigl(J(\theta)v-f(\theta)\bigr)+\varepsilon^2 v
          \label{eq:gradEtheta}
        \end{equation}
        and the definitions  \eqref{eq:M_eps_and_force_map} in \eqref{eq:Onsager_stationary}, we obtain the coupled DFI system
        \begin{equation}
          \dot\theta=v,
          \qquad
          \tau^2\dot v+M_{\varepsilon}(\theta) v=g(\theta)\,,
          \label{eq:continuous_dfo}
        \end{equation}
  which can be written in second-order form as 
        \begin{equation}
          \tau^2\ddot\theta+M_{\varepsilon}(\theta)\dot\theta
          =g(\theta).
          \label{eq:continuous_dfo_second_order}
        \end{equation}
 
  For fixed $\theta$, the $v$-equation in \eqref{eq:continuous_dfo} is the gradient flow of $\mcE_\theta^\varepsilon$ in the velocity variable $v$, with mobility $\tau^{-2}I_\Theta$.
       If $\varepsilon=0$, the relaxation is toward the affine set of unregularized Dirac--Frenkel minimizers. In the frozen-$\theta$ dynamics, if $\varepsilon>0$, the gradient flow relaxes toward the unique regularized Dirac--Frenkel velocity. 
       
       \begin{remark}We note that the second-order system \eqref{eq:continuous_dfo_second_order} is analogous to heavy-ball dynamics used in optimization; see, e.g., \cite{POLYAK19641,doi:10.1142/S0219199700000025}. 
       This reflects that the way we use inertia in DFI is analogous to using history information via a momentum variable in optimization, in particular when applied to natural gradient descent \cite{pmlr-v202-muller23b,GOLDSHLAGER2024113351,guzman-cordero2025improving,nouy2026naturalgradientdescentmomentum}. Among these works,  \cite{GOLDSHLAGER2024113351} is closest to our approach. It addresses the optimization problem of fitting neural network wave functions via variational Monte Carlo equipped with momentum. This leads to an anchored least-squares problem for the velocity analogous to our time-discrete formulation in \Cref{sec:euler-dfi}. 
       \end{remark}
       
\subsection{Interpretation of DFI}\label{sec:DFI:Interpretation}
Let us now interpret DFI direction by direction, in direct analogy with the regularization rules discussed in \Cref{sec:ProblemFormulation}.  
Freeze $\theta$ and choose an orthonormal basis $\{q_i\}_{i=1}^p$ of $\Theta$ consisting of eigenvectors of $J(\theta)^*J(\theta)$,
\[
J(\theta)^*J(\theta)q_i=\sigma_i^2q_i,
\]
with singular values $\sigma_i\ge 0$. 
Set
\[
  v_i:=\ip{v}{q_i}_\Theta,
  \qquad
  g_i:=\ip{g(\theta)}{q_i}_\Theta .
\]
Since
\[
  M_\varepsilon(\theta)q_i=(\sigma_i^2+\varepsilon^2)q_i,
\]
the $q_i$-component of \eqref{eq:continuous_dfo} is
\begin{equation}\label{eq:DFI:InterpretEq1}
  \tau^2 \dot v_i+(\sigma_i^2+\varepsilon^2)v_i=g_i ,
\end{equation}
which is in stark contrast to Tikhonov and other instantaneous regularizers that implement an instantaneous algebraic selection such as \eqref{eq:Prelim:ProbForm:InstanSelectRule}. If $\sigma_i^2 + \epsilon^2 > 0$, we can further rewrite \eqref{eq:DFI:InterpretEq1} as
\[
  \tau^2 \dot v_i
  +
  (\sigma_i^2+\varepsilon^2)(v_i-\bar v_i^\varepsilon)
  =
  0 
\]
to make explicit the connection to $\bar{v}_i^{\varepsilon}$ given by \eqref{eq:Prelim:ProbForm:InstanSelectRule}.  Thus, for frozen $\theta$, the Tikhonov coefficient $\bar v_i^\varepsilon$ is not imposed immediately; it is approached via relaxation. The relaxation time in this direction is $\tau^2/(\sigma_i^2+\varepsilon^2)$.
        In directions where $\tau^2 \ll \sigma_i^2+\varepsilon^2$, the velocity rapidly tracks the Tikhonov velocity $\bar{v}_{\varepsilon}$ given by \eqref{eq:continuous_df_eps}, so DFI behaves like the Tikhonov-regularized DF dynamics.
        In directions where $\tau^2 \gg \sigma_i^2+\varepsilon^2$, the relaxation is slow and the velocity is mainly transported by inertia.

For $\sigma_i>0$, the Tikhonov rule leads to the  damped velocity coefficient given in \eqref{eq:Prelim:ProbForm:DampedRuleABC}. 
In DFI, the damped velocity direction  \eqref{eq:Prelim:ProbForm:DampedRuleABC} is only the asymptotic target of the frozen-$\theta$ velocity dynamics. In particular, if the direction is weakly informed, the velocity need not be reset immediately to this small Tikhonov value; it can retain motion from the past trajectory controlled by $\tau$. The contrast between DFI and Tikhonov or truncated regularization is most pronounced in exact null directions.  If $q_i\in\ker J(\theta)$, then 
Tikhonov regularization with $\epsilon > 0$ selects $\bar{v}_i^{\varepsilon} = 0$ (see \eqref{eq:Prelim:ProbForm:HelperEps0}), 
whereas  DFI gives
\[
  \tau^2\dot v_i+\varepsilon^2 v_i=0 .
\]
In the frozen-$\theta$ model, if $\varepsilon=0$, then the velocity in nullspace directions is preserved by DFI. For $\varepsilon>0$, it is not removed instantaneously but decays exponentially. 

Thus, in the frozen-$\theta$ interpretation, DFI permits velocity components in parameter directions that are weakly seen by $J(\theta)$, including exact kernel directions, to persist over a relaxation time scale rather than being removed instantaneously. This makes DFI history-aware because the current residual still informs the velocity in directions resolved by the Jacobian, while components in weakly resolved directions are damped dynamically rather than eliminated by a pointwise algebraic rule.

\subsection{Well-posedness of DFI}
We now turn to the well-posedness of the DFI system \eqref{eq:continuous_dfo}. We first establish local existence and uniqueness, and then global existence under a growth condition on the force map $g$ defined in \eqref{eq:M_eps_and_force_map}.
\begin{proposition}[Local existence and uniqueness of DFI solution]
          \label{prop:local_wellposedness_dfo}
          Recall that $\Phi\in C^2(\Theta;H)$ and assume that $F:H\to H$ is locally Lipschitz.
          Then, for $\tau>0$,  $\varepsilon\ge0$, and  initial datum
          $(\theta^0,v^0)\in \Theta\times \Theta$, there exists a time $T_{\text{max}} \in (0, \infty]$ and a unique solution $(\theta(t), v(t))$ of the DFI system
          \eqref{eq:continuous_dfo} on $[0, T_{\text{max}})$ with $(\theta(0), v(0)) = (\theta^0, v^0)$. Moreover, this solution is maximal in the sense that it cannot be extended to any larger interval $[0, T')$ with $T' > T_{\text{max}}$. 
        \end{proposition}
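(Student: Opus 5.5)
We rewrite the DFI system \eqref{eq:continuous_dfo} as a first-order autonomous ODE on the finite-dimensional Hilbert space $X:=\Theta\times\Theta$. Set $z=(\theta,v)$ and
\[
  G(z):=\Bigl(v,\ \tau^{-2}\bigl(g(\theta)-M_\varepsilon(\theta)v\bigr)\Bigr),
\]
so that \eqref{eq:continuous_dfo} reads $\dot z=G(z)$ with $z(0)=(\theta^0,v^0)$. The plan is to show that $G$ is locally Lipschitz on $X$. The Picard--Lindelöf theorem then gives a local solution and uniqueness. The standard continuation argument for ODEs in finite dimensions, via Zorn's lemma or the union of all solutions agreeing on overlaps, then gives a unique maximal solution on $[0,T_{\max})$ with $T_{\max}\in(0,\infty]$. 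Since $\tau>0$, dividing by $\tau^2$ is harmless.

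The core step is local Lipschitz continuity of the two pieces, $\theta\mapsto M_\varepsilon(\theta)$ and $\theta\mapsto g(\theta)$.

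\textbf{The Jacobian.} Since $\Phi\in C^2(\Theta;H)$, the map $\theta\mapsto J(\theta)=D\Phi(\theta)\in\mathcal L(\Theta,H)$ is $C^1$, hence locally Lipschitz by the mean value inequality on convex balls.

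\textbf{The adjoint.} The map $\theta\mapsto J(\theta)^*\in\mathcal L(H,\Theta)$ is locally Lipschitz with the same constants, because $\|A^*\|=\|A\|$.

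\textbf{The mass operator.} Products of locally bounded, locally Lipschitz operator-valued maps are locally Lipschitz. Hence $M_\varepsilon(\theta)=J(\theta)^*J(\theta)+\varepsilon^2 I$ is locally Lipschitz, and so is $(\theta,v)\mapsto M_\varepsilon(\theta)v$. For the latter, on a ball $B$ one splits
\[
  M(\theta)v-M(\theta')v'=\bigl(M(\theta)-M(\theta')\bigr)v+M(\theta')(v-v').
\]

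\textbf{The force map.} Here $g(\theta)=J(\theta)^*F(\Phi(\theta))$. The map $\Phi$ is $C^1$, hence locally Lipschitz and bounded on bounded sets, and it maps a closed ball $B\subset\Theta$ into a bounded set $\Phi(B)\subset H$.

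This last point is where care is needed, and I expect it to be the only real obstacle. "Locally Lipschitz" for $F$ on an infinite-dimensional $H$ only gives Lipschitz continuity on some neighborhood of each point, not necessarily on bounded sets. So I will use compactness instead of boundedness. The image $\Phi(B)$ is compact, being the continuous image of a compact ball in the finite-dimensional space $\Theta$. A locally Lipschitz map is Lipschitz on a neighborhood of any compact set: cover $\Phi(B)$ by finitely many balls on which $F$ is Lipschitz, and use a Lebesgue-number argument. For pairs of points that are far apart, use the boundedness of $F$ on the compact set to bound $\|F(x)-F(y)\|\le 2\sup\|F\|\le C\|x-y\|/\delta$.

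It follows that $F\circ\Phi$ is Lipschitz on $B$ and bounded there. Its product with the locally Lipschitz, locally bounded map $J^*$ is therefore Lipschitz on $B$.

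\textbf{Conclusion.} Combining these, $G$ is Lipschitz on every set $B\times B'$ with $B,B'$ closed balls. Since $X$ is finite-dimensional, the classical Picard--Lindelöf theorem and the maximal-solution theorem apply directly and yield the proposition. No sign condition on $\varepsilon$ is needed: $\varepsilon\ge0$ enters only through the smooth term $\varepsilon^2 v$.
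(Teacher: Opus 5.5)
Your proposal is correct and follows the same route as the paper. Both recast DFI as a first-order ODE on $\Theta\times\Theta$, verify local Lipschitz continuity of $J$, $J^*$, $f=F\circ\Phi$, $g$ and $(\theta,v)\mapsto M_\varepsilon(\theta)v$, and invoke Cauchy--Lipschitz with maximal continuation. Your compactness argument for $F\circ\Phi$ is valid but more than you need: a composition of locally Lipschitz maps is already locally Lipschitz, and that is all the paper uses and all Picard--Lindel\"of requires.
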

  \begin{proof} Since $\Theta$ is finite-dimensional, the DFI system is an ordinary differential equation on the finite-dimensional phase space $\Theta\times\Theta$. 
          Since $\Phi\in C^2$, the map $\theta\mapsto J(\theta)=D\Phi(\theta)$ is locally Lipschitz.
          Since $F$ is locally Lipschitz and $\Phi$ is $C^2$, the composition $\theta\mapsto f(\theta)=F(\Phi(\theta))$ is locally Lipschitz as well. Therefore $g(\theta)$ is locally Lipschitz, because
\[
g(\theta_1)-g(\theta_2)
=
\bigl(J(\theta_1)^*-J(\theta_2)^*\bigr)f(\theta_1)
+
J(\theta_2)^*\bigl(f(\theta_1)-f(\theta_2)\bigr).
\]
On each bounded neighborhood in $\Theta$, the maps $J$, $J^*$, and $f$ are locally bounded, and $J$ and $f$ are locally Lipschitz; hence the right-hand side is bounded by a constant times $\|\theta_1-\theta_2\|_\Theta$.

Similarly, 
\[
M_\varepsilon(\theta)=J(\theta)^*J(\theta)+\varepsilon^2I
\]
is locally Lipschitz and thus the map
\[
(\theta,v)\mapsto M_\varepsilon(\theta)v
\]
is locally Lipschitz on $\Theta\times\Theta$. 
          Hence the phase-space vector field
          \begin{equation*}
            G_{\tau,\varepsilon}(\theta,v)
            :=
            \left(
            v,\,
            \tau^{-2}\left(J(\theta)^*f(\theta)-M_\varepsilon(\theta)v\right)
            \right)
          \end{equation*}
          is locally Lipschitz on $\Theta\times \Theta$.
          The classical Cauchy--Lipschitz theorem therefore yields a unique maximal local solution.
        \end{proof}
        
 \begin{proposition}[Global existence of DFI solution]
          \label{prop:global_existence_dfo}
          Assume the hypotheses of Proposition~\ref{prop:local_wellposedness_dfo} and, in addition, that the map $g$ defined in \eqref{eq:M_eps_and_force_map} satisfies
          \begin{equation}
            \norm{g(\theta)}_\Theta\le C_g(1+\norm{\theta}_\Theta),
            \qquad \theta\in \Theta,
            \label{eq:dfo_force_linear_growth}
          \end{equation}
          for some constant $C_g>0$.
          Then every maximal solution of \eqref{eq:continuous_dfo} is global.
        \end{proposition}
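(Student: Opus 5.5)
We argue by contradiction using the blow-up alternative for the maximal solution from Proposition~\ref{prop:local_wellposedness_dfo}. Since the phase space $\Theta\times\Theta$ is finite-dimensional and the vector field $G_{\tau,\varepsilon}$ is locally Lipschitz, the standard continuation argument gives: if $T_{\text{max}}<\infty$, then $\norm{\theta(t)}_\Theta+\norm{v(t)}_\Theta\to\infty$ as $t\uparrow T_{\text{max}}$. It therefore suffices to show that $(\theta,v)$ stays bounded on $[0,T)$ for every finite $T\le T_{\text{max}}$.

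The key step is an energy estimate for $v$ that uses the monotonicity of $M_\varepsilon$. Take the $\Theta$-inner product of the $v$-equation in \eqref{eq:continuous_dfo} with $v$:
\[
\frac{\tau^2}{2}\frac{\dd}{\dd t}\norm{v}_\Theta^2+\norm{J(\theta)v}_H^2+\varepsilon^2\norm{v}_\Theta^2=\ip{g(\theta)}{v}_\Theta .
\]
Because $M_\varepsilon(\theta)$ is positive semidefinite for every $\theta$ (even when $\varepsilon=0$), the dissipative terms can be dropped. This holds no matter how badly $J(\theta)$ grows. Combining with the growth bound \eqref{eq:dfo_force_linear_growth} gives
\[
\frac{\tau^2}{2}\frac{\dd}{\dd t}\norm{v}_\Theta^2\le C_g(1+\norm{\theta}_\Theta)\norm{v}_\Theta .
\]

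Now couple this with $\dot\theta=v$. Set $y(t):=1+\norm{\theta(t)}_\Theta^2+\norm{v(t)}_\Theta^2$. From the first equation,
\[
\frac{\dd}{\dd t}\norm{\theta}_\Theta^2=2\ip{\theta}{v}_\Theta\le\norm{\theta}_\Theta^2+\norm{v}_\Theta^2 .
\]
From the energy estimate and Young's inequality, $\frac{\dd}{\dd t}\norm{v}_\Theta^2\le 2\tau^{-2}C_g(1+\norm{\theta}_\Theta)\norm{v}_\Theta\le C\,y$. Together these give $\dot y\le (1+C)\,y$ with $C$ depending only on $C_g$ and $\tau$. Grönwall's lemma then yields
\[
y(t)\le y(0)e^{(1+C)t},
\]
which is finite on every bounded time interval. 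This contradicts blow-up at a finite $T_{\text{max}}$, so $T_{\text{max}}=\infty$.

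The only delicate point is that the matrix $M_\varepsilon(\theta)$ may be unbounded in $\theta$, since no growth condition on $J$ is assumed. Hence one cannot bound the right-hand side of the ODE linearly. The plan avoids this because $M_\varepsilon$ enters the energy identity only through the nonnegative term $\ip{M_\varepsilon(\theta)v}{v}_\Theta$. We must take the inner product with $v$, rather than estimate $\norm{\dot v}_\Theta$ directly. Differentiability of $\norm{v}_\Theta^2$ is immediate, since the solution is $C^1$.
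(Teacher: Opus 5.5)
Your proposal is correct and follows essentially the same route as the paper. Both arguments test the $v$-equation with $v$, drop the nonnegative term $\ip{M_\varepsilon(\theta)v}{v}_\Theta$ (so no growth condition on $J$ is needed), use the linear growth of $g$ with Young's inequality, apply Gronwall, and conclude with the continuation theorem. The only cosmetic difference is that the paper works with $X=\norm{\theta}_\Theta^2+\tau^2\norm{v}_\Theta^2$ and an affine Gronwall inequality, whereas you absorb the constant by adding $1$ to $y$ and leave $\norm{v}_\Theta^2$ unweighted.
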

 \begin{proof}
          Let $(\theta(t),v(t))$ be a maximal local solution and define
          \begin{equation}\label{eq:ProofGlobalExsist:XDef}
            X(t):=\norm{\theta(t)}_\Theta^2+\tau^2\norm{v(t)}_\Theta^2\,,
          \end{equation}
          which is differentiable because by the Cauchy--Lipschitz theorem, the maximal local solution is continuously differentiable. 
          Along the solution we compute
          \begin{align*}
            \dot X(t)
             & =
            2\ip{\theta(t)}{v(t)}_\Theta
            +2\ip{g(\theta(t))}{v(t)}_\Theta
            -2\norm{J(\theta(t))v(t)}_H^2
            -2\varepsilon^2\norm{v(t)}_\Theta^2                                          \\
             & \leq  2\ip{\theta(t)}{v(t)}_\Theta
            +2\ip{g(\theta(t))}{v(t)}_{\Theta}\,.
            \end{align*}
            Now use Cauchy--Schwarz to obtain
            \[
            \langle \theta(t), v(t)\rangle_{\Theta} \leq \|\theta(t)\|_{\Theta}\|v(t)\|_{\Theta}\,,\qquad \langle g(\theta(t)), v(t)\rangle_{\Theta} \leq \|g(\theta(t))\|_{\Theta}\|v(t)\|_{\Theta}\,.
            \]
            Using Young's inequality $2ab \leq a^2 + b^2$, we obtain
            \begin{align*}
            \dot{X}(t)  & \leq \norm{\theta(t)}_\Theta^2+2\norm{v(t)}_\Theta^2+\norm{g(\theta(t))}_\Theta^2. 
            \end{align*}
            Recall the growth condition \eqref{eq:dfo_force_linear_growth} to obtain
            \[
            \|g(\theta(t))\|_{\Theta}^2 \leq C_g^2(1 + \|\theta(t)\|_{\Theta})^2 \leq 2C_g^2 + 2C_g^2\|\theta(t)\|_{\Theta}^2\,,
            \]
            where we used $(a + b)^2 \leq 2a^2 + 2b^2$ in the last step, and bound $\dot{X}(t)$ as
            \begin{equation}\label{eq:GlobalExtProof:HelperA}
            \dot{X}(t) \leq 2C_g^2 + (1 + 2C_g^2)\|\theta(t)\|_{\Theta}^2 + 2\|v(t)\|_{\Theta}^2\,.
            \end{equation}
            The terms involving $J(\theta)^*J(\theta)$ and $\varepsilon^2 I$ are dissipative in this estimate; therefore no separate growth assumption on $J(\theta)$ is needed for this particular global bound. 
            
            To write the right-hand side of \eqref{eq:GlobalExtProof:HelperA} in terms of $X(t)$ notice that both terms in \eqref{eq:ProofGlobalExsist:XDef} are non-negative so that
            \[
            \|\theta(t)\|_{\Theta}^2 \leq X(t)\,,\qquad \tau^2 \|v(t)\|_{\Theta}^2 \leq X(t)\,,
            \]
            and thus
            \begin{equation*}
             \dot{X}(t) \leq
            2C_g^2+\Bigl(1+2C_g^2+\frac{2}{\tau^2}\Bigr)X(t).
          \end{equation*}
          Gronwall's lemma therefore yields a bound on $X(t)$ on every finite time interval. Thus $(\theta(t),v(t))$ remains bounded on every finite time interval. Since the vector field is locally Lipschitz on the finite-dimensional phase space $\Theta\times\Theta$, the standard continuation theorem for ODEs implies that the maximal existence time is infinite \cite[Theorem~2.17]{TeschlODE}. 
        \end{proof}

    Proposition~\ref{prop:global_existence_dfo} shows that the nonuniqueness in the parameters given by (unregularized) Dirac--Frenkel dynamics \eqref{eq:DF_parameter_level} is avoided by DFI if an initial condition on $\theta$ and $v$ is prescribed.
        Instead of selecting, independently at each $\theta$, one element of the affine set $\mcS_0(\theta)$, DFI treats the parameter velocity as part of the state.
        Once the initial phase point $(\theta^0,v^0)$ is fixed, Proposition~\ref{prop:local_wellposedness_dfo} yields a unique parameter trajectory even for $\varepsilon=0$, and Proposition~\ref{prop:global_existence_dfo} shows that this trajectory exists for all times under the stated growth condition.

\subsection{A posteriori error analysis of DFI in continuous time}
\label{sec:continuous_error_analysis}

 Throughout this section we fix $\tau>0$ and $\varepsilon \geq 0$ and let $(\theta(t),v(t))$ be a sufficiently smooth solution of \eqref{eq:continuous_dfo}. For each $\theta$, recall the velocity $\bar{v}_{\varepsilon}(t) = \bar{v}_{\varepsilon}(\theta(t))$ given by \eqref{eq:TDFVelocity} for $\epsilon > 0$. For $\epsilon = 0$, consider any minimizer $\bar{v}_0(t)$. While $\bar{v}_0(t)$ need not be unique, $J(\theta(t))\bar{v}_0(t)$ is independent of the specific minimizer and thus we can define the corresponding 
\begin{equation}\label{eq:Posteriori:DotBarU}
\dot{\bar u}_\varepsilon(t):=J(\theta(t))\bar v_\varepsilon(t) 
\end{equation}
for $\epsilon \geq 0$. Note that $\bar{v}_0(t)$ might not depend continuously on time if $\varepsilon=0$; however, the following estimates depend on \eqref{eq:Posteriori:DotBarU} instead of $\bar{v}_0(t)$ directly. 

We consider the projection defect as 
        \begin{equation}
          \delta_\varepsilon(t)^2
          :=
          \norm{\dot{\bar u}_\varepsilon(t)-f(\theta(t))}_H^2
          +
          \varepsilon^2\norm{\bar v_\varepsilon(t)}_\Theta^2,
          \label{eq:delta_eps_cont}
        \end{equation}
        which is also used in \cite[Section 3.1]{Feischl2024}. Along the DFI trajectory, define the relaxation lag by
\[
r(t)
:=
J(\theta(t))\bigl(v(t)-\bar v_\varepsilon(t)\bigr)
=
\dot{\hat u}(t)-\dot{\bar u}_\varepsilon(t).
\]
  This motivates introducing the relaxation defect as
        \begin{equation}
          \rho_\varepsilon(t)^2
          :=
          \norm{r(t)}_H^2
          +
          \varepsilon^2\norm{v(t)-\bar v_\varepsilon(t)}_\Theta^2.
          \label{eq:rho_eps_cont}
        \end{equation}
   The following proposition shows that the instantaneous total defect can be decomposed into the projection defect $\delta_{\varepsilon}$ and the relaxation defect $\rho_{\varepsilon}$. 
   
   \begin{proposition}[Instantaneous defect decomposition]
          \label{prop:continuous_defect_decomposition}
          For every $t$ in the interval of existence,
          \begin{equation}
            \norm{\dot{\hat u}(t)-F(\hat u(t))}_H^2+\varepsilon^2\norm{v(t)}_\Theta^2
            =
            \delta_\varepsilon(t)^2+\rho_\varepsilon(t)^2.
            \label{eq:continuous_defect_identity}
          \end{equation}
        \end{proposition}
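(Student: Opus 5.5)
The identity is a Pythagorean decomposition in the product Hilbert space $H\times\Theta$, equipped with the inner product $\ip{(a,b)}{(c,d)}:=\ip{a}{c}_H+\varepsilon^2\ip{b}{d}_\Theta$. We fix $t$ and suppress the time argument. Consider the augmented linear map $A:\Theta\to H\times\Theta$, $Aw:=(J(\theta)w,w)$, together with the target $b:=(f(\theta),0)$. Then $\mcE_\theta^\varepsilon(w)=\tfrac12\norm{Aw-b}^2$, and the left-hand side of \eqref{eq:continuous_defect_identity} is exactly $\norm{Av-b}^2$, because $\dot{\hat u}=J(\theta)\dot\theta=J(\theta)v$ and $F(\hat u)=f(\theta)$.

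Next we split $Av-b=(Av-A\bar v_\varepsilon)+(A\bar v_\varepsilon-b)$. The first summand is $(r,\,v-\bar v_\varepsilon)$, whose squared norm is $\rho_\varepsilon^2$ by \eqref{eq:rho_eps_cont}. The second summand is $(\dot{\bar u}_\varepsilon-f,\,\bar v_\varepsilon)$, whose squared norm is $\delta_\varepsilon^2$ by \eqref{eq:delta_eps_cont}. Expanding the square therefore reduces the claim to showing that the cross term
\[
\ip{J(\theta)(v-\bar v_\varepsilon)}{J(\theta)\bar v_\varepsilon-f(\theta)}_H+\varepsilon^2\ip{v-\bar v_\varepsilon}{\bar v_\varepsilon}_\Theta
\]
vanishes.

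To kill the cross term, we move $J(\theta)$ to the other side via the adjoint. This rewrites it as $\ip{v-\bar v_\varepsilon}{J(\theta)^*(J(\theta)\bar v_\varepsilon-f(\theta))+\varepsilon^2\bar v_\varepsilon}_\Theta=\ip{v-\bar v_\varepsilon}{\nabla\mcE_\theta^\varepsilon(\bar v_\varepsilon)}_\Theta$, using \eqref{eq:gradEtheta}. Since $\bar v_\varepsilon$ minimizes the convex quadratic $\mcE^\varepsilon_\theta$ over all of $\Theta$, it satisfies the normal equations $M_\varepsilon(\theta)\bar v_\varepsilon=g(\theta)$, so the gradient is zero and the cross term vanishes.

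The only delicate point is the case $\varepsilon=0$, where $\bar v_0$ is an arbitrary element of $\mcS_0(\theta)$. Here we must check that the normal equation $J^*J\bar v_0=J^*f$ still holds; it does, since every minimizer of a convex differentiable functional is a critical point. We also check that the $\varepsilon^2$-terms disappear, so that everything depends only on $\dot{\bar u}_0$, which is minimizer-independent as noted after \eqref{eq:Posteriori:DotBarU}. With $\varepsilon=0$ the form on $H\times\Theta$ is only a semi-inner product, but the expansion of the square remains valid. No property of the DFI dynamics beyond $\dot\theta=v$ is used: the identity is purely instantaneous.
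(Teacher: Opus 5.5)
Your proof is correct and takes essentially the same route as the paper. Both split $v=\bar v_\varepsilon+(v-\bar v_\varepsilon)$ and expand the square into $\delta_\varepsilon^2+\rho_\varepsilon^2$ plus a cross term; the cross term is then removed by passing $J(\theta)$ to the adjoint and using the first-order optimality condition $\nabla\mathcal{E}^\varepsilon_\theta(\bar v_\varepsilon)=0$. Your weighted (semi-)inner product on $H\times\Theta$ only repackages that expansion. Your explicit treatment of $\varepsilon=0$, where any minimizer is a critical point, spells out what the paper leaves implicit.
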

  \begin{proof}
          Decompose $v(t)$ as 
          \[
          v(t) = \bar{v}_{\varepsilon}(t) + (v(t) - \bar{v}_{\varepsilon}(t)) = \bar{v}_{\varepsilon}(t) + w(t)\,,
          \]
          to write 
          \begin{align*}
          \|\dot{\hat{u}}(t) - F(\hat{u}(t))\|_H^2 + \varepsilon^2 \|v(t)\|_{\Theta}^2 &= \|J(\theta(t))v(t) - f(\theta(t))\|_H^2 + \varepsilon^2\|v(t)\|_{\Theta}^2 \\
          & = \|J(\theta(t))\bar{v}_{\varepsilon}(t) - f(\theta(t)) + J(\theta(t))w(t)\|_H^2 + \varepsilon^2 \|\bar{v}_{\varepsilon}(t) + w(t)\|_{\Theta}^2\\
          & = (\|J(\theta(t))\bar{v}_{\varepsilon}(t) - f(\theta(t))\|_H^2 + \varepsilon^2\|\bar{v}_{\varepsilon}(t)\|^2_{\Theta})\\
          & + (\|J(\theta(t))w(t)\|_H^2 + \varepsilon^2\|w(t)\|_{\Theta}^2)\\
          & + 2\langle J(\theta(t))\bar{v}_{\varepsilon}(t) - f(\theta(t)), J(\theta(t))w(t)\rangle_H + 2\varepsilon^2\langle \bar{v}_{\varepsilon}(t), w(t)\rangle_{\Theta}\\
          & = \delta_{\varepsilon}(t)^2 + \rho_{\varepsilon}(t)^2\\
          & + 2\langle J(\theta(t))\bar{v}_{\varepsilon}(t) - f(\theta(t)), J(\theta(t))w(t)\rangle_H + 2\varepsilon^2\langle \bar{v}_{\varepsilon}(t), w(t)\rangle_{\Theta}\,. 
          \end{align*}
          We now show that the cross terms vanish, which then leads to the decomposition \eqref{eq:continuous_defect_identity}. Consider
          \[
          \langle J(\theta(t))\bar{v}_{\varepsilon}(t) - f(\theta(t)), J(\theta(t))w(t)\rangle_H = \langle J(\theta(t))^*(J(\theta(t))\bar{v}_{\varepsilon}(t) - f(\theta(t))), w(t)\rangle_{\Theta}
          \]
          and thus
          \begin{equation}\label{eq:InstantDefectDecomposition:HelperA}
          \begin{aligned}
          &\langle J(\theta(t))\bar{v}_{\varepsilon}(t) - f(\theta(t)), J(\theta(t))w(t)\rangle_H + \varepsilon^2\langle \bar{v}_{\varepsilon}(t), w(t)\rangle_{\Theta}\\
          &\qquad = \langle J(\theta(t))^*(J(\theta(t))\bar{v}_{\varepsilon}(t) - f(\theta(t))) + \varepsilon^2\bar{v}_{\varepsilon}(t), w(t)\rangle_{\Theta}\,.
          \end{aligned}
          \end{equation}
          The left argument of the inner product in \eqref{eq:InstantDefectDecomposition:HelperA} is the gradient that vanishes under the first-order optimality conditions of the objective \eqref{eq:Etheta}, which is minimized by 
         $\bar{v}_{\varepsilon}(t)$. Thus, the left argument of the inner product of \eqref{eq:InstantDefectDecomposition:HelperA} is zero and the cross terms vanish.
        \end{proof}

   \begin{theorem}[Continuous a posteriori error bound]
          \label{thm:continuous_aposteriori}We assume that the vector field $F$ satisfies the one-sided Lipschitz estimate
        \begin{equation}
          \ip{u-\widetilde u}{F(u)-F(\widetilde u)}_H
          \le
          \ell \norm{u-\widetilde u}_H^2,
          \qquad u,\widetilde u\in H,
          \label{eq:one_sided_lipschitz_cont}
        \end{equation}
        for some $\ell\in\R$.
          Let $u(t)$ be a sufficiently smooth solution of \eqref{eq:ode_intro} on $[0,T]$, and let $\hat u(t)=\Phi(\theta(t))$ be the DFI approximation. 
          Then, for every $t\in[0,T]$,
          \begin{equation}
            \norm{\hat u(t)-u(t)}_H
            \le
            e^{\ell t}\norm{\hat u(0)-u(0)}_H
            +
            \int_0^t e^{\ell(t-s)}
            \bigl(\delta_\varepsilon(s)^2+\rho_\varepsilon(s)^2\bigr)^{1/2}\,\dif s.
            \label{eq:continuous_aposteriori_bound}
          \end{equation}
          In particular, if $\hat u(0)=u(0)$, then
          \begin{equation}
            \norm{\hat u(t)-u(t)}_H
            \le
            \int_0^t e^{\ell(t-s)}
            \bigl(\delta_\varepsilon(s)^2+\rho_\varepsilon(s)^2\bigr)^{1/2}\,\dif s.
            \label{eq:continuous_aposteriori_exact_data}
          \end{equation}
        \end{theorem}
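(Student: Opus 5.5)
We will prove the bound by a standard energy (Gronwall-type) argument for the error $e(t):=\hat u(t)-u(t)$ in $H$. The first step is to write the error equation. Differentiating and using $\dot u=F(u)$ gives
\[
\dot e(t)=\dot{\hat u}(t)-F(u(t))=\bigl(F(\hat u(t))-F(u(t))\bigr)+\bigl(\dot{\hat u}(t)-F(\hat u(t))\bigr).
\]
The second bracket is the total instantaneous Dirac--Frenkel defect, which we abbreviate by $d(t):=\dot{\hat u}(t)-F(\hat u(t))$.

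The second step is an energy estimate. Taking the $H$-inner product with $e$ gives
\[
\tfrac12\tfrac{\dd}{\dd t}\norm{e}_H^2=\ip{e}{F(\hat u)-F(u)}_H+\ip{e}{d}_H\le \ell\norm{e}_H^2+\norm{e}_H\norm{d}_H ,
\]
where we used the one-sided Lipschitz estimate \eqref{eq:one_sided_lipschitz_cont} and Cauchy--Schwarz. Where $\norm{e}_H>0$, this yields the differential inequality $\tfrac{\dd}{\dd t}\norm{e}_H\le \ell\norm{e}_H+\norm{d}_H$.

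The third step bounds the defect. By Proposition~\ref{prop:continuous_defect_decomposition},
\[
\norm{d(t)}_H\le\bigl(\norm{d(t)}_H^2+\varepsilon^2\norm{v(t)}_\Theta^2\bigr)^{1/2}=\bigl(\delta_\varepsilon(t)^2+\rho_\varepsilon(t)^2\bigr)^{1/2},
\]
since the term $\varepsilon^2\norm{v}^2_\Theta$ is nonnegative and may simply be added. Multiplying the differential inequality by $e^{-\ell t}$ and integrating over $[0,t]$ then gives \eqref{eq:continuous_aposteriori_bound}. The exact-data case \eqref{eq:continuous_aposteriori_exact_data} follows immediately by setting $e(0)=0$.

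The main obstacle is technical: $\norm{e(t)}_H$ need not be differentiable where $e(t)=0$. To avoid this, I would work with $\eta_\kappa(t):=(\norm{e(t)}_H^2+\kappa^2)^{1/2}$ for $\kappa>0$. This function is differentiable and satisfies $\dot\eta_\kappa\le \ell\,\norm{e}_H^2/\eta_\kappa+\norm{d}_H$. For $\ell\ge0$ the first term is at most $\ell\eta_\kappa$. For $\ell<0$ one instead writes $\ell\norm{e}^2_H/\eta_\kappa=\ell\eta_\kappa-\ell\kappa^2/\eta_\kappa\le \ell\eta_\kappa+|\ell|\kappa$. Applying Gronwall to $\eta_\kappa$ and letting $\kappa\to0$ gives the claim.

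A smaller point to check is that $d(t)$ is integrable on $[0,t]$. This holds because the DFI solution is $C^1$ (Proposition~\ref{prop:local_wellposedness_dfo}). The right-hand side involves $\delta_\varepsilon$ and $\rho_\varepsilon$ only through their sum, which equals the continuous quantity $\norm{d}_H^2+\varepsilon^2\norm{v}_\Theta^2$. So possible discontinuities of $\bar v_0$ when $\varepsilon=0$ are harmless.
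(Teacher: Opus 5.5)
Your proposal is correct and follows the paper's proof. It uses the same error equation, the same one-sided Lipschitz plus Cauchy--Schwarz energy estimate, the same bound $\norm{\dot{\hat u}-F(\hat u)}_H^2\le\delta_\varepsilon^2+\rho_\varepsilon^2$ obtained from Proposition~\ref{prop:continuous_defect_decomposition} by dropping $\varepsilon^2\norm{v}_\Theta^2$, and Gronwall. Your smoothing $(\norm{e}_H^2+\kappa^2)^{1/2}$ with $\kappa\to0$, including the correct extra $|\ell|\kappa$ term for $\ell<0$, handles the zeros of $e$ more rigorously than the paper's informal ``restart'' argument.
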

   \begin{proof}
          Define the error $e(t):=\hat u(t)-u(t)$.
          Because $u$ solves \eqref{eq:ode_intro},
          \begin{equation*}
            \dot e(t)=F(\hat u(t))-F(u(t))+\bigl(\dot{\hat u}(t)-F(\hat u(t))\bigr).
          \end{equation*}
        Now consider
          \begin{equation*}
            \frac12\frac{\dd}{\dd t}\norm{e(t)}_H^2 = \langle e(t), \dot{e}(t)\rangle_H
             = \langle \hat{u}(t) - u(t), F(\hat{u}(t)) - F(u(t)) \rangle_H + \langle e(t), \dot{\hat{u}}(t) - F(\hat{u}(t))\rangle_H\,.
          \end{equation*}
Using \eqref{eq:one_sided_lipschitz_cont}, we obtain 
\[
\frac12\frac{\dd}{\dd t}\norm{e(t)}_H^2 \leq \ell \|e(t)\|_H^2 + \langle e(t), \dot{\hat{u}}(t) - F(\hat{u}(t))\rangle_H\,.
\]
Applying Cauchy-Schwarz to the second term and using  \eqref{eq:continuous_defect_identity} to obtain  
          \begin{equation*}
            \norm{\dot{\hat u}(t)-F(\hat u(t))}_H^2
            \le
            \delta_\varepsilon(t)^2+\rho_\varepsilon(t)^2,
          \end{equation*}
        leads to
          \begin{equation*}
            \frac12\frac{\dd}{\dd t}\norm{e(t)}_H^2 
            \le
            \ell\norm{e(t)}_H^2
            +
            \norm{e(t)}_H
            \bigl(\delta_\varepsilon(t)^2+\rho_\varepsilon(t)^2\bigr)^{1/2}.
          \end{equation*}
          Whenever $\|e(t)\|_H\neq0$, division by $\norm{e(t)}_H$ yields
          \begin{equation*}
            \frac{\dd}{\dd t}\norm{e(t)}_H
            \le
            \ell\norm{e(t)}_H+\bigl(\delta_\varepsilon(t)^2+\rho_\varepsilon(t)^2\bigr)^{1/2},
          \end{equation*}
          and when $\|e(t)\|_H = 0$ then we restart the same argument from time $t$ when $\|e(t)\|_H \neq 0$.  
          By continuity this differential inequality extends to all $t\in[0,T]$, and Gronwall's lemma gives \eqref{eq:continuous_aposteriori_bound}.
          The second estimate is the specialization to exact initial data.
        \end{proof}

          Compared with the Tikhonov-regularized Dirac--Frenkel estimate \cite[Section~3.1]{Feischl2024}, this bound separates the instantaneous defect along the DFI trajectory into the regularized projection defect $\delta_\varepsilon$ and the relaxation defect $\rho_\varepsilon$;  however, it should not be read as the same estimate with an extra nonnegative term added along the same path.
          The quantities $\delta_\varepsilon$ and $\rho_\varepsilon$ are evaluated along the DFI trajectory $\theta(t)$, whereas the Tikhonov-regularized Dirac--Frenkel defect is evaluated along the trajectory generated by $\dot\theta=\bar v_\varepsilon(\theta)$.
          These trajectories may visit different regions of the parameter space $\Theta$, even when their function-space approximations are close.
          In particular, the inertial dynamics can move through kernel and near-kernel directions  and thereby in principle can sample representatives for which the instantaneous least-squares problem is better conditioned.
          Thus, $\delta_\varepsilon(t)$ may be smaller than the corresponding regularized Dirac--Frenkel defect along its own trajectory, but this is a trajectory-dependent effect rather than a pointwise comparison of the two bounds.

\section{Euler time discretization of DFI}
\label{sec:euler-dfi}
We now turn to the time discretization of DFI. We derive a time-discrete scheme based on a semi-implicit Euler discretization of \eqref{eq:continuous_dfo} that treats the velocity relaxation implicitly and the parameter update explicitly.
This choice yields a velocity update through an implicit solve, while keeping the parameter update explicit; equivalently, each step becomes a previous-velocity-anchored least-squares problem that exposes the inertial memory mechanism.

\subsection{Euler time discretization}
Let $h_k>0$ be the step size and $t_{k+1}=t_k+h_k$. Given $(\theta_k,v_k)$, we set
        \begin{equation}
          \hat u_k:=\Phi(\theta_k),
          \qquad J_k:=J(\theta_k),
          \qquad f_k:=f(\theta_k)=F(\hat u_k).
          \label{eq:frozen_quantities}
        \end{equation}
We discretize the DFI system \eqref{eq:continuous_dfo} with implicit Euler in $v$ and explicit Euler in $\theta$, 
        \begin{equation}\label{eq:DiscDFIEuler1}
        \begin{aligned}
          \theta_{k+1}= & \theta_k+h_k v_{k+1},\\
          \tau^2\frac{v_{k+1}-v_k}{h_k} = & -J_k^*(J_kv_{k+1}-f_k)-\varepsilon^2 v_{k+1}.
        \end{aligned}
        \end{equation}
It is convenient to further introduce
        \begin{equation}
\eta_k^2:=\varepsilon^2+\frac{\tau^2}{h_k} \in (0,\infty),
          \qquad
          \beta_k:=\frac{\tau^2}{\varepsilon^2h_k + \tau^2}\in(0,1],
          \qquad
          M_{\eta,k}:=J_k^*J_k+\eta_k^2 I.
          \label{eq:beta_eta_Mk_main}
        \end{equation} Note that $0<\beta_k<1$ when $\varepsilon>0$, while $\beta_k=1$ when $\varepsilon=0$.
        A direct computation shows that the discretized system \eqref{eq:DiscDFIEuler1} can be written as 
        \begin{equation}\label{eq:beta_eta_system_eta_beta}
        \begin{aligned}
          \theta_{k+1} & = \theta_k+h_k v_{k+1},                                    \\
          v_{k+1}      & = \beta_k v_k+M_{\eta,k}^{-1}J_k^*\bigl(f_k-\beta_k J_kv_k\bigr).
        \end{aligned}
        \end{equation}

The formulation \eqref{eq:beta_eta_system_eta_beta} mirrors the continuous direction-by-direction interpretation of DFI. 
If $q_{k,i}$ is a right singular vector of $J_k$ with singular value $\sigma_{k,i}$, and
\[
  v_{k,i}:=\ip{v_k}{q_{k,i}}_\Theta,
  \qquad
  g_{k,i}:=\ip{J_k^*f_k}{q_{k,i}}_\Theta,
\]
then
\[
  \ip{v_{k+1}}{q_{k,i}}_\Theta
  =
  \frac{g_{k,i}}{\sigma_{k,i}^2+\eta_k^2}
  +
  \frac{\beta_k\eta_k^2}{\sigma_{k,i}^2+\eta_k^2}v_{k,i}.
\]
Thus, for frozen $\theta_k$, the discrete update has the same type of direction-dependent balance as the continuous DFI dynamics in the sense that when $\sigma_{k,i}^2\gg\eta_k^2$, the memory factor $\beta_k\eta_k^2/(\sigma_{k,i}^2+\eta_k^2)$ 
is small, so the update is dominated by the current least-squares information. When $\sigma_{k,i}^2\ll\eta_k^2$, this factor is close to $\beta_k$, so the update retains most of the damped previous velocity coefficient.  
On $\ker J_k$, this reduces to $\ip{v_{k+1}}{q_{k,i}}_\Theta=\beta_k\ip{v_k}{q_{k,i}}_\Theta$, so nullspace motion is conserved when $\varepsilon=0$ and damped otherwise.

\subsection{Variational characterization of time-discrete DFI}
Let us now give a variational characterization of the DFI step \eqref{eq:beta_eta_system_eta_beta}, which will be useful for interpreting  the time-discrete DFI scheme and for the error analysis.

\begin{proposition}[Discrete variational characterization]
          \label{prop:discrete_variational_characterization}
          For fixed $k$, the velocity update $v_{k + 1}$ defined by the update \eqref{eq:beta_eta_system_eta_beta} is the unique minimizer of 
\begin{equation}
  \mcJ_k(w)
  :=
  \frac12\norm{J_kw-f_k}_H^2
  +
  \frac{\eta_k^2}{2}\norm{w-\beta_kv_k}_\Theta^2 .
  \label{eq:Jk_main_shifted}
\end{equation}
That is,  \begin{equation}
            v_{k+1}=\arg\min_{w\in \Theta}\mcJ_k(w) .
            \label{eq:main_minimization}
          \end{equation}
        \end{proposition}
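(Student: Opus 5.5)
The plan is to show that $\mcJ_k$ is a strictly convex quadratic, write down its first-order optimality condition, and check that this condition is the velocity equation in \eqref{eq:beta_eta_system_eta_beta}.

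First, I establish strict convexity and coercivity. Because $\eta_k^2=\varepsilon^2+\tau^2/h_k>0$, the Hessian of $\mcJ_k$ is $J_k^*J_k+\eta_k^2I=M_{\eta,k}$. This operator is self-adjoint and positive definite on the finite-dimensional space $\Theta$. Hence $\mcJ_k$ is strictly convex and coercive, so it has a unique minimizer, and that minimizer is characterized by $\nabla\mcJ_k(w)=0$.

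Next, I compute the gradient with respect to $\ip{\cdot}{\cdot}_\Theta$, exactly as in \eqref{eq:gradEtheta}:
\[
\nabla\mcJ_k(w)=J_k^*(J_kw-f_k)+\eta_k^2(w-\beta_kv_k).
\]
Setting this to zero gives $M_{\eta,k}w=J_k^*f_k+\eta_k^2\beta_kv_k$. Since $M_{\eta,k}$ is invertible, the minimizer is $w=M_{\eta,k}^{-1}(J_k^*f_k+\eta_k^2\beta_kv_k)$.

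The remaining step, which is the only real bookkeeping, is to match this expression with \eqref{eq:beta_eta_system_eta_beta}. Write
\[
\beta_kv_k=M_{\eta,k}^{-1}M_{\eta,k}\beta_kv_k=M_{\eta,k}^{-1}\bigl(J_k^*J_k\beta_kv_k+\eta_k^2\beta_kv_k\bigr).
\]
Then
\[
\beta_kv_k+M_{\eta,k}^{-1}J_k^*(f_k-\beta_kJ_kv_k)=M_{\eta,k}^{-1}\bigl(J_k^*f_k+\eta_k^2\beta_kv_k\bigr),
\]
which is exactly the minimizer found above.

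As a consistency check against \eqref{eq:DiscDFIEuler1}, multiply the implicit Euler equation by $1$ and rearrange to get $(J_k^*J_k+\varepsilon^2I+\tfrac{\tau^2}{h_k}I)v_{k+1}=J_k^*f_k+\tfrac{\tau^2}{h_k}v_k$. This agrees with the formula above because $\tau^2/h_k=\eta_k^2\beta_k$, which follows from $\eta_k^2\beta_k=\frac{\varepsilon^2h_k+\tau^2}{h_k}\cdot\frac{\tau^2}{\varepsilon^2h_k+\tau^2}$.

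Uniqueness follows from strict convexity, which completes the proof.
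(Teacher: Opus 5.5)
Your proof is correct and follows essentially the same route as the paper. Both use strong convexity from $\eta_k^2>0$, the first-order condition $M_{\eta,k}w=J_k^*f_k+\eta_k^2\beta_kv_k$, and the identity $\beta_k\eta_k^2=\tau^2/h_k$. The only difference is that you also check the match against the closed form \eqref{eq:beta_eta_system_eta_beta} directly, whereas the paper matches against \eqref{eq:DiscDFIEuler1} and relies on its earlier ``direct computation'' linking the two forms.
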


\begin{proof}
         Since $\eta_k^2>0$, the functional $\mcJ_k$ is strongly convex and therefore has a unique minimizer. 
Its first-order optimality condition is
\[
  J_k^*(J_kw-f_k)+\eta_k^2(w-\beta_kv_k)=0 .
\]
For $w=v_{k+1}$ this becomes
\[
  (J_k^*J_k+\eta_k^2I)v_{k+1}
  =
  J_k^*f_k+\beta_k\eta_k^2v_k .
\]
Using $\beta_k\eta_k^2=\tau^2/h_k$, this becomes
\[
\left(J_k^*J_k+\varepsilon^2I+\frac{\tau^2}{h_k}I\right)v_{k+1}
=
J_k^*f_k+\frac{\tau^2}{h_k}v_k,
\]
which is precisely the velocity equation in \eqref{eq:DiscDFIEuler1}. 
Thus the unique minimizer of $\mcJ_k$ is the velocity given by the semi-implicit Euler step.
\end{proof}

The variational characterization shows that the DFI step is a (damped-)previous-velocity-anchored least-squares problem.  The new velocity is chosen with the aim to reduce the instantaneous Dirac--Frenkel residual while remaining close to the damped previous velocity $\beta_kv_k$.  Thus $\eta_k^2$ controls the strength of the anchoring, whereas $\beta_k$ controls how much of the previous velocity is retained in the anchor. We note that an analogous anchored least-squares problem is also obtained when using momentum in optimization with variational Monte Carlo  \cite{GOLDSHLAGER2024113351}.

\subsection{Algorithm}
With the change of variables $z=w-\beta_kv_k$ and after multiplying the objective by the factor 2, the minimization problem \eqref{eq:Jk_main_shifted} can be written as
\begin{equation}
  z_{k+1}
  =
  \arg\min_{z\in \Theta}
    \norm{J_k z -(f_k - \beta_k J_k v_k)}_H^2
    +
    \eta_k^2\norm{z}_\Theta^2
  ,
  \label{eq:main_minimization_shifted}
\end{equation}
and the velocity is recovered by
\[
  v_{k+1}=\beta_k v_k+z_{k+1}.
\]
The shifted form \eqref{eq:main_minimization_shifted} is convenient for implementation. It requires solving one standard Tikhonov-regularized least-squares problem for the correction $z_{k+1}$, while the retained part $\beta_kv_k$ carries the inertial memory of the method. The resulting semi-implicit Euler DFI algorithm is summarized in Algorithm~\ref{alg:DFI}.

\begin{algorithm}
  \caption{Semi-implicit Euler Dirac--Frenkel dynamics with inertia (DFI)}\label{alg:DFI}
  \begin{algorithmic}[1]
    \Require Initial data $\theta_0\in\Theta$, $v_0\in\Theta$, step sizes $h_k>0$, parameters $\tau>0$ and $\varepsilon\geq 0$
    \Ensure Iterates $(\hat u_k)_{k\geq 0}$
    \For{$k=0,1,2,\ldots$}
      \State Set $\hat u_k=\Phi(\theta_k)$, $J_k=J(\theta_k)$, and $f_k=F(\hat u_k)$.
      \State Set
      \[
        \eta_k^2=\varepsilon^2+\frac{\tau^2}{h_k},
        \qquad
        \beta_k=\frac{\tau^2}{\varepsilon^2h_k+\tau^2}.
      \]
      \State Solve the regularized least-squares problem
      \[
        z_{k+1}
        =
        \arg\min_{z\in\Theta}
          \norm{J_k z-(f_k-\beta_k J_kv_k)}_H^2
          +
          \eta_k^2\norm{z}_\Theta^2.
      \]
      \State Set $v_{k+1}=\beta_k v_k+z_{k+1}$.
      \State Set $\theta_{k+1}=\theta_k+h_kv_{k+1}$.
    \EndFor
  \end{algorithmic}
\end{algorithm}

\section{A posteriori error analysis of Euler-discretized DFI}
\label{sec:error_analysis}
We now provide an a posteriori analysis of the Euler-discretized DFI scheme \eqref{eq:beta_eta_system_eta_beta}. Throughout this section we fix $\tau>0$ and $\varepsilon>0$.
  \tc{The restriction $\varepsilon>0$ is used to control the norm of the velocity and to formulate a condition linking the step size, the local defect, and the regularization parameter. While we consider in this section only $\varepsilon > 0$, we note that similar arguments could be used for the case $\varepsilon=0$ provided that appropriate assumptions\footnote{For example, assume $c_{\Phi} = 0$ in Assumption~\ref{ass:flow_stability_new}(ii).} are made to keep the norm of the velocities bounded.}

\subsection{Local error bound}
We start by making stronger assumptions on the problem than in previous sections. We denote the flow of \eqref{eq:ode_intro} as
\begin{equation}
  \varphi_t:H\to H,
  \qquad u(t)=\varphi_t(u(0)).
  \label{eq:exact_flow}
\end{equation}
\begin{assumption}[Flow stability and regularity]
  \label{ass:flow_stability_new}
  Fix $T>0$.
  Assume that there exist constants $\ell\in\R$, $C_\Phi\ge0$, $c_\Phi\ge0$, and $C_a\ge0$ such that the following hold for all relevant states.
  \begin{enumerate}[label=\textnormal{(\roman*)},leftmargin=2.5em]
    \item The exact flow satisfies
          \begin{equation}
            \norm{\varphi_t(u)-\varphi_t(\widetilde u)}_H
            \le e^{\ell t}\norm{u-\widetilde u}_H,
            \qquad 0\le t\le T.
            \label{eq:flow_stability_new}
          \end{equation}
    \item The second derivative of $\Phi$ is bounded, relative to the frozen Jacobian $J(\theta)w:=D\Phi(\theta)[w]$ at the base point, as follows for all relevant $\theta,\zeta,\xi$:
          \begin{equation}
            \norm{D^2\Phi(\theta+s\zeta)[\xi,\xi]}_H
            \le
            C_\Phi\norm{J(\theta)\xi}_H^2
            +
            c_\Phi\norm{\xi}_\Theta^2,
            \qquad 0\le s\le 1.
            \label{eq:D2Phi_bound_new}
          \end{equation}
    \item For every relevant initial state $w$ and the corresponding exact trajectory $y(s)=\varphi_s(w)$, one has
          \[
            \norm{\ddot y(s)}_H\le C_a
          \]
          for all times $s$ for which the trajectory is used below.
  \end{enumerate}
\end{assumption}

We denote the discrete-time defect at time step $k$ as
\tc{
\begin{equation}
  \Delta_k^2
  :=
  \norm{J_kv_{k+1}-f_k}_H^2
  +
  \varepsilon^2\norm{v_{k+1}}_\Theta^2
  +
  \frac{\tau^2}{h_k}\norm{v_{k+1}-v_k}_\Theta^2\,.
  \label{eq:Delta_min_error}
\end{equation}
}
\tc{
The relation to the anchored least-squares objective \eqref{eq:Jk_main_shifted} is revealed by the identity
\begin{equation}
  \varepsilon^2\norm{v_{k+1}}_\Theta^2
  +
  \frac{\tau^2}{h_k}\norm{v_{k+1}-v_k}_\Theta^2
  =
  \eta_k^2\norm{v_{k+1}-\beta_kv_k}_\Theta^2
  +
  \varepsilon^2\beta_k\norm{v_k}_\Theta^2,
  \label{eq:Delta_augmented_identity_new}
\end{equation}
so that
\begin{equation}
  \Delta_k^2
  =
  2\mcJ_k(v_{k+1})
  +
  \varepsilon^2\beta_k\norm{v_k}_\Theta^2.
  \label{eq:Delta_objective_identity_new}
\end{equation}
}

\begin{lemma}[One-step local error]
  \label{lem:local_error_new}
  Under Assumption~\ref{ass:flow_stability_new}, for every time step $k$,
  the local error of one Euler step
  \begin{equation}\label{eq:one_step_uplus}
    \hat{u}_+ := \Phi(\theta_k + h_kv_{k + 1})\,.
  \end{equation}
  satisfies
  \begin{equation}
    \norm{\hat u_+-\varphi_{h_k}(\hat u_k)}_H
    \le
    h_k\,\Delta_k
    +
    C_\Phi h_k^2\norm{f_k}_H^2
    +
    \tc{
    h_k^2
    \left(
    C_\Phi+\frac{c_\Phi}{2\varepsilon^2}
    \right)\Delta_k^2}
    +
    \frac{C_a}{2}h_k^2 .
    \label{eq:local_error_bound_new}
  \end{equation}
\end{lemma}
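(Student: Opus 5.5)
The plan is to insert the two intermediate quantities $\hat u_k + h_k J_k v_{k+1}$ and $\hat u_k + h_k f_k$, and split the local error into three pieces:
\[
\hat u_+-\varphi_{h_k}(\hat u_k)
=\bigl(\hat u_+-\hat u_k-h_kJ_kv_{k+1}\bigr)
+h_k\bigl(J_kv_{k+1}-f_k\bigr)
+\bigl(\hat u_k+h_kf_k-\varphi_{h_k}(\hat u_k)\bigr).
\]
\textbf{Second term.} It is bounded by $h_k\norm{J_kv_{k+1}-f_k}_H\le h_k\Delta_k$, directly from the definition \eqref{eq:Delta_min_error}.

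\textbf{Third term.} This is the local error of explicit Euler for the exact flow. Let $y(s)=\varphi_s(\hat u_k)$, so that $y(0)=\hat u_k$ and $\dot y(0)=F(\hat u_k)=f_k$. Taylor's formula with integral remainder gives
\[
y(h_k)-y(0)-h_k\dot y(0)=\int_0^{h_k}(h_k-s)\ddot y(s)\,\dif s .
\]
By Assumption~\ref{ass:flow_stability_new}(iii), its norm is at most $\tfrac{C_a}{2}h_k^2$.

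\textbf{First term.} This is the nonlinearity error of $\Phi$. With $\xi=h_kv_{k+1}$ and Taylor's formula for $\Phi\in C^2$,
\[
\Phi(\theta_k+\xi)-\Phi(\theta_k)-J_k\xi=\int_0^1(1-s)D^2\Phi(\theta_k+s\xi)[\xi,\xi]\,\dif s .
\]
Assumption~\ref{ass:flow_stability_new}(ii), together with $\int_0^1(1-s)\,\dif s=\tfrac12$, bounds this by
\[
\tfrac{h_k^2}{2}\Bigl(C_\Phi\norm{J_kv_{k+1}}_H^2+c_\Phi\norm{v_{k+1}}_\Theta^2\Bigr).
\]
For the first norm, write $J_kv_{k+1}=(J_kv_{k+1}-f_k)+f_k$ and use $(a+b)^2\le 2a^2+2b^2$:
\[
\norm{J_kv_{k+1}}_H^2\le 2\norm{J_kv_{k+1}-f_k}_H^2+2\norm{f_k}_H^2\le 2\Delta_k^2+2\norm{f_k}_H^2 .
\]
For the second norm, use $\varepsilon>0$ and read off from \eqref{eq:Delta_min_error} that $\varepsilon^2\norm{v_{k+1}}_\Theta^2\le\Delta_k^2$, so $\norm{v_{k+1}}_\Theta^2\le\Delta_k^2/\varepsilon^2$. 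Substituting gives
\[
C_\Phi h_k^2\Delta_k^2+C_\Phi h_k^2\norm{f_k}_H^2+\frac{c_\Phi}{2\varepsilon^2}h_k^2\Delta_k^2 ,
\]
which matches the corresponding terms of \eqref{eq:local_error_bound_new} exactly.

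\textbf{Conclusion and main obstacle.} Summing the three bounds by the triangle inequality yields \eqref{eq:local_error_bound_new}. No step is technically hard. The main point of care is the first term: the Taylor remainder has to be measured relative to the frozen Jacobian $J_k$ at the base point, so that Assumption~\ref{ass:flow_stability_new}(ii) applies with $\theta=\theta_k$ and $\zeta=\xi$. The split $J_kv_{k+1}=(J_kv_{k+1}-f_k)+f_k$ is also what makes the constants come out as $C_\Phi$ in front of both $\Delta_k^2$ and $\norm{f_k}_H^2$. Finally, $\varepsilon>0$ is used only in the bound $\norm{v_{k+1}}_\Theta^2\le\Delta_k^2/\varepsilon^2$.
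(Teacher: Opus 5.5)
Your proposal is correct and follows essentially the same route as the paper: Taylor expansions of the exact flow and of $\Phi$ about $\theta_k$, the bounds $\norm{J_kv_{k+1}}_H^2\le 2\norm{f_k}_H^2+2\Delta_k^2$ and $\norm{v_{k+1}}_\Theta^2\le\Delta_k^2/\varepsilon^2$, and the triangle inequality. The only cosmetic difference is that you expand with increment $h_kv_{k+1}$ instead of first pulling out $h_k^2$; this is harmless because the bound in Assumption~\ref{ass:flow_stability_new}(ii) is quadratically homogeneous in $\xi$.
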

\begin{proof}
  Let $ y_k(s):=\varphi_s(\hat u_k)$ for $0\le s\le h_k$.
  Then $y_k(0)=\hat u_k$ and, since $y_k$ solves the exact evolution equation, $\dot y_k(0)=F(\hat u_k)=f_k$.
  Taylor's formula with integral remainder gives
  \begin{equation}
    \varphi_{h_k}(\hat u_k)
    =
    y_k(h_k)
    =
    y_k(0)+h_k\dot y_k(0)
    +
    \int_0^{h_k}(h_k-s)\ddot y_k(s)\,ds .
    \label{eq:flow_taylor_integral_new}
  \end{equation}
  Hence
  \begin{equation}
    \varphi_{h_k}(\hat u_k)
    =
    \hat u_k+h_k f_k+r_k^{\mathrm{flow}},
    \label{eq:flow_taylor_new}
  \end{equation}
  where
  \begin{equation}
    r_k^{\mathrm{flow}}
    :=
    \int_0^{h_k}(h_k-s)\ddot y_k(s)\,ds .
    \label{eq:flow_remainder_def_new}
  \end{equation}
  By Assumption~\ref{ass:flow_stability_new}\textnormal{(iii)},
  \begin{align}
    \norm{r_k^{\mathrm{flow}}}_H
     & \le
    \int_0^{h_k}(h_k-s)\norm{\ddot y_k(s)}_H\,ds
    \notag \\
     & \le
    C_a\int_0^{h_k}(h_k-s)\,ds
    =
    \frac{C_a}{2}h_k^2 .
    \label{eq:flow_remainder_bound_new}
  \end{align}
  Next, Taylor's formula for $\Phi$ applied to the curve $s \mapsto \theta_k + sh_k v_{k + 1}$, which parametrizes the straight line segment in $\Theta$ from $\theta_k$ to $\theta_{k + 1} = \theta_k + h_k v_{k + 1}$,  gives
  \begin{align}
    \hat u_+
     & =
    \Phi(\theta_k+h_kv_{k+1})
    \notag \\
     & =
    \Phi(\theta_k)
    +
    h_kD\Phi(\theta_k)[v_{k+1}]
    +
    h_k^2
    \int_0^1
    (1-s)
    D^2\Phi(\theta_k+s h_kv_{k+1})
    [v_{k+1},v_{k+1}]
    \,ds .
    \label{eq:Phi_taylor_integral_new}
  \end{align}
  Since $\hat u_k=\Phi(\theta_k)$ and
  $J_kv_{k+1}=D\Phi(\theta_k)[v_{k+1}]$ (see \eqref{eq:Jf_def}), this becomes
  \begin{equation}
    \hat u_+
    =
    \hat u_k+h_kJ_kv_{k+1}+r_k^\Phi,
    \label{eq:Phi_taylor_new}
  \end{equation}
  where
  \begin{equation}
    r_k^\Phi
    :=
    h_k^2
    \int_0^1
    (1-s)
    D^2\Phi(\theta_k+s h_kv_{k+1})
    [v_{k+1},v_{k+1}]
    \,ds .
    \label{eq:Phi_remainder_def_new}
  \end{equation}
  Applying Assumption~\ref{ass:flow_stability_new}\textnormal{(ii)}
  with $
    \theta=\theta_k,
    \zeta=h_kv_{k+1},
    \xi=v_{k+1}$
  yields
  \begin{align}
    \norm{r_k^\Phi}_H
     & \le
    h_k^2
    \int_0^1
    (1-s)
    \norm{
    D^2\Phi(\theta_k+s h_kv_{k+1})
    [v_{k+1},v_{k+1}]
    }_H
    \,ds
    \notag \\
     & \le
    h_k^2
    \int_0^1
    (1-s)
    \left(
    C_\Phi\norm{J_kv_{k+1}}_H^2
    +
    c_\Phi\norm{v_{k+1}}_\Theta^2
    \right)
    \,ds
    \notag \\
     & =
    \frac{h_k^2}{2}
    \left(
    C_\Phi\norm{J_kv_{k+1}}_H^2
    +
    c_\Phi\norm{v_{k+1}}_\Theta^2
    \right).
    \label{eq:Phi_remainder_bound_new}
  \end{align}
  Let us now bound the state velocity $J_kv_{k+1}$.
    \tc{
      Since
      \begin{equation}
        J_kv_{k+1}=f_k+(J_kv_{k+1}-f_k),
      \end{equation}
      the definition of $\Delta_k$ gives
      \begin{equation} \label{eq:state_velocity_bound}
        \norm{J_kv_{k+1}}_H^2
        \le
        2\norm{f_k}_H^2
        +
        2\norm{J_kv_{k+1}-f_k}_H^2
        \le
        2\norm{f_k}_H^2
        +
        2\Delta_k^2.
      \end{equation}
      Moreover,
      \begin{equation} \label{eq:velocity_bound}
        \norm{v_{k+1}}_\Theta^2
        \le
        \frac{\Delta_k^2}{\varepsilon^2}.
      \end{equation}
    }

  We now subtract the exact-flow expansion
  \eqref{eq:flow_taylor_new} from the parametric expansion
  \eqref{eq:Phi_taylor_new} and obtain with the triangle inequality,
  \begin{align}
    \norm{\hat u_{k+1}-\varphi_{h_k}(\hat u_k)}_H
     & \le
    h_k\norm{J_kv_{k+1}-f_k}_H
    +
    \norm{r_k^\Phi}_H
    +
    \norm{r_k^{\mathrm{flow}}}_H .
    \label{eq:local_difference_bound_new}
  \end{align}
  Inserting the bounds
  \eqref{eq:flow_remainder_bound_new},
  \eqref{eq:Phi_remainder_bound_new},
  \tc{ \eqref{eq:state_velocity_bound},
      and \eqref{eq:velocity_bound}}, we obtain
  \begin{align}
    \norm{\hat u_{k+1}-\varphi_{h_k}(\hat u_k)}_H
     & \le
    h_k\norm{J_kv_{k+1}-f_k}_H
    +
    C_\Phi h_k^2\norm{f_k}_H^2
    +
    \tc{
    h_k^2
    \left(
    C_\Phi+\frac{c_\Phi}{2\varepsilon^2}
    \right)\Delta_k^2}
    +
    \frac{C_a}{2}h_k^2 .
    \label{eq:local_error_before_defect_new}
  \end{align}
  Finally, by using \eqref{eq:Delta_min_error}, we obtain the bound \eqref{eq:local_error_bound_new}.
\end{proof}

\tc{
The local-error bound in Lemma~\ref{lem:local_error_new} contains a quadratic defect contribution with a factor $\varepsilon^{-2}$, arising from the control of the norm of the velocity $v_{k+1}$.
To prevent this contribution from deteriorating as $\varepsilon$ becomes small, we impose the defect-control condition
}
\begin{equation}
  h_k\Delta_k\le c_\Delta\tc{\varepsilon^2}.
  \label{eq:DFIStepSizeControl}
\end{equation}
\tc{
Under this condition, the factor $\varepsilon^{-2}$ is compensated by $h_k\Delta_k$, and the corresponding contribution can be bounded by a constant multiple of $h_k\Delta_k$.
This is the same type of restriction as in the analysis of Tikhonov-regularized Dirac--Frenkel dynamics; see \cite{Feischl2024}.
Notably, its right-hand side contains only the Tikhonov contribution $\varepsilon^2$, rather than the effective regularization $\eta_k^2=\varepsilon^2+\tau^2/h_k$.
This is because the parameter-curvature term \eqref{eq:Phi_remainder_bound_new} involves the velocity $v_{k+1}$, which is controlled through $\varepsilon\norm{v_{k+1}}_\Theta\le\Delta_k$, whereas the inertial contribution controls only the increment $v_{k+1}-v_k$.
Thus, at the level of this stability argument, DFI and Tikhonov-regularized Dirac--Frenkel dynamics are subject to the same defect-control scale.
This does not preclude taking $\eta_k$ substantially larger than $\varepsilon$: when the DFI velocities vary coherently in time, the inertial contribution can increase the effective regularization without substantially increasing the defect.
We return to this point of view below.
}
\begin{proposition}[Local error under defect control]
  \label{prop:local_error_defect_control_new}
  Under the setup of Lemma~\ref{lem:local_error_new}, suppose that there exists a constant $c_\Delta\ge0$ such that \eqref{eq:DFIStepSizeControl} holds for all $k$.
  The local error of $\hat{u}_+$ given in \eqref{eq:one_step_uplus} satisfies
  \begin{align}
    \norm{\hat u_+-\varphi_{h_k}(\hat u_k)}_H
     & \le
    h_k\,\Delta_k
    +
    C_\Phi h_k^2\norm{f_k}_H^2
    +
    \tc{
    c_\Delta
    \left(
    C_\Phi\varepsilon^2+\frac{c_\Phi}{2}
    \right)h_k\Delta_k}
    +
    \frac{C_a}{2}h_k^2.
    \label{eq:defect_control_local_error_new}
  \end{align}
\end{proposition}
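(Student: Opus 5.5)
This follows directly from Lemma~\ref{lem:local_error_new}, so the plan is to take the bound \eqref{eq:local_error_bound_new} and only rewrite its quadratic defect term. The terms $h_k\Delta_k$, $C_\Phi h_k^2\norm{f_k}_H^2$ and $\frac{C_a}{2}h_k^2$ pass through unchanged. The only term to treat is
\[
h_k^2\Bigl(C_\Phi+\frac{c_\Phi}{2\varepsilon^2}\Bigr)\Delta_k^2 .
\]

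I would factor it as $(h_k\Delta_k)\cdot h_k\Delta_k\bigl(C_\Phi+\frac{c_\Phi}{2\varepsilon^2}\bigr)$. Then I apply the defect-control condition \eqref{eq:DFIStepSizeControl} to one factor $h_k\Delta_k\le c_\Delta\varepsilon^2$; since all quantities are nonnegative, the inequality is preserved. This gives
\[
h_k^2\Bigl(C_\Phi+\frac{c_\Phi}{2\varepsilon^2}\Bigr)\Delta_k^2
\le c_\Delta\varepsilon^2\Bigl(C_\Phi+\frac{c_\Phi}{2\varepsilon^2}\Bigr)h_k\Delta_k
= c_\Delta\Bigl(C_\Phi\varepsilon^2+\frac{c_\Phi}{2}\Bigr)h_k\Delta_k .
\]
The factor $\varepsilon^{2}$ cancels the $\varepsilon^{-2}$, which is legitimate because $\varepsilon>0$ is fixed throughout this section.

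Substituting this back into \eqref{eq:local_error_bound_new} yields exactly \eqref{eq:defect_control_local_error_new}. There is no real obstacle. The one point to check is that the condition is applied at the same index $k$ as the step under consideration, which holds since \eqref{eq:DFIStepSizeControl} is assumed for all $k$.
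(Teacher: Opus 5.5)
Your proposal is correct and matches the paper's proof: you factor the quadratic term of \eqref{eq:local_error_bound_new} as $h_k\Delta_k\bigl(C_\Phi+\frac{c_\Phi}{2\varepsilon^2}\bigr)(h_k\Delta_k)$, bound one factor $h_k\Delta_k$ by $c_\Delta\varepsilon^2$ using \eqref{eq:DFIStepSizeControl}, and substitute back into the lemma. Your remark that the multiplying factor is nonnegative (because $C_\Phi,c_\Phi\ge0$ and $\varepsilon>0$) is the only detail the inequality step requires.
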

\begin{proof}
  \tc{
    Under \eqref{eq:DFIStepSizeControl},
    \begin{align}
      h_k^2
      \left(
      C_\Phi+\frac{c_\Phi}{2\varepsilon^2}
      \right)\Delta_k^2
       & =
      h_k\Delta_k
      \left(
      C_\Phi+\frac{c_\Phi}{2\varepsilon^2}
      \right)
      (h_k\Delta_k)
      \notag \\
       & \le
      c_\Delta
      \left(
      C_\Phi\varepsilon^2+\frac{c_\Phi}{2}
      \right)h_k\Delta_k.
    \end{align}
  }
  The result follows from \eqref{eq:local_error_bound_new}.
\end{proof}

\subsection{Global error}
The following global bound shows that the semi-implicit Euler DFI trajectory is controlled by the accumulated local DFI defects along the computed path. In the uniform-step case $h_k=h$, the estimate takes the form
\[
  \norm{\hat u_n-u(t_n)}_H
  \le
  C\bigl(\bar\Delta_{n-1}+h\bigr),
\]
where the constant $C$ depends on the flow stability constant in \eqref{eq:flow_stability_new}, the curvature constants of $\Phi$, the final time, and the DFI parameters. Thus, the non-defect contribution to the error is proportional to $h$. However, we strongly emphasize that this does not by itself prove that the error vanish under smaller time-step size $h \to 0$.

\begin{proposition}[A posteriori global error bound]
  \label{thm:global_error_new}
  Under the hypotheses of  Proposition~\ref{prop:local_error_defect_control_new}, let $u(t)$ denote the exact solution of \eqref{eq:ode_intro} with $u(0)=\hat u_0$.
  Then for every $k \geq 1$ with $t_k \leq T$,
  \begin{equation}
    \norm{\hat u_k-u(t_k)}_H
    \le
    \sum_{j=0}^{k-1}
    e^{\ell(t_k-t_{j+1})}
    \Biggl[
    h_j\Delta_j
    +
    C_\Phi h_j^2\norm{f_j}_H^2
    +
    \tc{
    c_\Delta
    \left(
    C_\Phi\varepsilon^2+\frac{c_\Phi}{2}
    \right)h_j\Delta_j}
    +
    \frac{C_a}{2}h_j^2
    \Biggr]\,,
    \label{eq:global_error_new}
  \end{equation}
  where $\ell$ is the constant from the flow stability in Assumption~\ref{ass:flow_stability_new}(i).
  \tc{
  Set
  \begin{equation*}
    \bar\Delta_{k-1}:=\max_{0\le j\le k-1}\Delta_j,
    \qquad
    \bar h:=\max_{0\le j\le k-1}h_j.
  \end{equation*}}
  Then, there exists a constant $C_T>0$, depending only on $T$, $\ell$, and universal numerical constants, such that for every $k$ with $t_k\le T$,
  \begin{equation}
    \norm{\hat u_k-u(t_k)}_H
    \le
    C_T
    \Biggl[
    \tc{
        \left(
        1
        +
        c_\Delta
        \left(
          C_\Phi\varepsilon^2+\frac{c_\Phi}{2}
          \right)
        \right)
        \bar\Delta_{k-1}}
    +
    \tc{
    \bar h
    \left(
    C_\Phi\bar F_{k-1}^2
    +
    \frac{C_a}{2}
    \right)}
    \Biggr]\,,
    \label{eq:global_bigO_new}
  \end{equation}
  where
  \begin{equation}
    \bar F_{k-1}:=\max_{0\le j\le k-1}\norm{f_j}_H.
    \label{eq:bar_F_def_new}
  \end{equation}
\end{proposition}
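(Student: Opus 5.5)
The first bound \eqref{eq:global_error_new} follows from the standard Lady Windermere's fan argument, which combines the local error of Proposition~\ref{prop:local_error_defect_control_new} with the flow stability of Assumption~\ref{ass:flow_stability_new}(i). Write $u(t_k)=\varphi_{t_k}(\hat u_0)$ and use the semigroup property of the exact flow. Telescoping gives
\[
\hat u_k-u(t_k)=\sum_{j=0}^{k-1}\Bigl(\varphi_{t_k-t_{j+1}}(\hat u_{j+1})-\varphi_{t_k-t_{j}}(\hat u_{j})\Bigr),
\]
because the $j=k-1$ term contains $\varphi_0(\hat u_k)=\hat u_k$ and the $j=0$ term contains $\varphi_{t_k}(\hat u_0)=u(t_k)$, with all intermediate terms cancelling. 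Each summand can be rewritten as $\varphi_{t_k-t_{j+1}}(\hat u_{j+1})-\varphi_{t_k-t_{j+1}}\bigl(\varphi_{h_j}(\hat u_j)\bigr)$. Since $0\le t_k-t_{j+1}\le T$, the stability estimate \eqref{eq:flow_stability_new} bounds its norm by $e^{\ell(t_k-t_{j+1})}\norm{\hat u_{j+1}-\varphi_{h_j}(\hat u_j)}_H$.

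Next note that $\hat u_{j+1}=\Phi(\theta_j+h_jv_{j+1})$ is exactly the one-step quantity $\hat u_+$ of \eqref{eq:one_step_uplus} at step $j$. Proposition~\ref{prop:local_error_defect_control_new} therefore applies directly. Inserting its bound \eqref{eq:defect_control_local_error_new} and applying the triangle inequality to the sum yields \eqref{eq:global_error_new}. This step requires checking that the states visited by the scheme are among the ``relevant'' states of the assumption, which I will take as part of the hypotheses.

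For the simplified bound \eqref{eq:global_bigO_new}, I bound each bracket from above using the maxima. This gives $h_j\Delta_j\le h_j\bar\Delta_{k-1}$ and $C_\Phi h_j^2\norm{f_j}^2_H\le h_j\,\bar h\,C_\Phi\bar F_{k-1}^2$, and similarly $\frac{C_a}{2}h_j^2\le h_j\bar h\frac{C_a}{2}$. Every term in the $j$-th bracket is then $h_j$ times a $j$-independent quantity. The remaining task is to bound $\sum_{j=0}^{k-1}h_je^{\ell(t_k-t_{j+1})}$ by a constant $C_T$ depending only on $T$ and $\ell$. This holds because $\sum_j h_j=t_k\le T$ and $e^{\ell(t_k-t_{j+1})}\le e^{\max(\ell,0)T}$, so one may take $C_T=Te^{\max(\ell,0)T}$. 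Alternatively, one can compare the sum with $\int_0^{t_k}e^{\ell(t_k-s)}\dif s$; the crude bound is already sufficient.

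No step is genuinely hard. The main point needing care is the telescoping identity, namely that the semigroup property $\varphi_{t_k-t_j}=\varphi_{t_k-t_{j+1}}\circ\varphi_{h_j}$ holds and that stability is applied with nonnegative times at most $T$. I expect the rest to be a routine bookkeeping of constants.
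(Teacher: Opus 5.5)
Your proposal is correct and follows essentially the same route as the paper's proof. Both use Lady Windermere's fan telescoping with the semigroup property and flow stability, insert the local bound from Proposition~\ref{prop:local_error_defect_control_new}, and then apply $h_j^2\le\bar h h_j$ and $\sum_j e^{\ell(t_k-t_{j+1})}h_j\le e^{\max(\ell,0)T}T$, arriving at the same constant $C_T=Te^{\max(\ell,0)T}$.
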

\begin{proof}
  Fix $k\ge1$ with $t_k\le T$. The index $k$ denotes the final time at which we want to estimate the global error $\|\hat{u}_k - u(t_k)\|_H$. We use $j=0,\dots,k-1$ to index the individual time steps whose local errors are propagated to the final time $t_k$.

  Since $u(0) = \hat{u}_0$, we have $u(t_k) = \varphi_{t_k}(\hat{u}_0)$. Now we introduce the intermediate propagated states
  \[
    W_j := \varphi_{t_k - t_j}(\hat{u}_j)\,,\qquad j = 0, \dots, k\,.
  \]
  At the end points, we have $W_0 = \varphi_{t_k}(\hat{u}_0) = u(t_k)$ and $W_k = \varphi_0(\hat{u}_k) = \hat{u}_k$. Therefore $\hat{u}_k - u(t_k) = W_k - W_0$ and telescoping (Lady Windermere's fan) gives us
  \begin{equation}\label{eq:Proof:TelescopingglobalError}
    \hat{u}_k - u(t_k) = W_k - W_0 = \sum_{j = 0}^{k - 1}W_{j + 1} - W_j\,.
  \end{equation}
  Now use the semigroup property $\varphi_{t_k - t_j} = \varphi_{t_k - t_{j + 1}} \circ \varphi_{h_j}$ of the exact flow $\varphi$ to obtain
  \[
    W_{j+1}-W_j
    =
    \varphi_{t_k-t_{j+1}}(\hat u_{j+1})
    -
    \varphi_{t_k-t_{j+1}}(\varphi_{h_j}(\hat u_j)).
  \]
  Therefore, by the triangle inequality and flow stability,
  \begin{equation}\label{eq:Proof:TelescopingWithTriangle}
    \norm{\hat u_k-u(t_k)}_H
    \le
    \sum_{j=0}^{k-1}
    e^{\ell(t_k-t_{j+1})}
    \norm{\hat u_{j+1}-\varphi_{h_j}(\hat u_j)}_H,
  \end{equation}
  where we took the $H$-norm,  applied the triangle inequality, and used the flow stability  \eqref{eq:flow_stability_new}. Now notice that the terms $\|\hat{u}_{j + 1} - \varphi_{h_j}(\hat{u}_j)\|_H$ denote local errors in \eqref{eq:Proof:TelescopingWithTriangle}, so we can substitute the bound \eqref{eq:defect_control_local_error_new} from Proposition~\ref{prop:local_error_defect_control_new} into \eqref{eq:Proof:TelescopingWithTriangle} to obtain \eqref{eq:global_error_new}.

  It remains to prove the simplified estimate
  \eqref{eq:global_bigO_new}. Let $\ell_+:=\max\{\ell,0\}$.
  Since $0\le t_k-t_{j+1}\le T$, we have
  \begin{equation}
    \sum_{j=0}^{k-1}
    e^{\ell(t_k-t_{j+1})}\tc{h_j}
    \le
    e^{\ell_+T}
    \sum_{j=0}^{k-1}\tc{h_j}
    =
    e^{\ell_+T}t_k
    \le
    e^{\ell_+T}T.
    \label{eq:global_weighted_h_sum_new}
  \end{equation}
  For $0\le j\le k-1$, we have
  \[
    \Delta_j\le\bar\Delta_{k-1},
    \qquad
    \norm{f_j}_H\le\bar F_{k-1},
    \qquad
    \tc{h_j^2\le\bar h h_j}.
  \]
  Applying these estimates to
  \eqref{eq:global_error_new}, and using
  \eqref{eq:global_weighted_h_sum_new}, gives \eqref{eq:global_bigO_new}
  \tc{with $C_T=e^{\ell_+T}T$}.
\end{proof}

\subsection{Interpretation and discussion of the bounds}
\tc{
  We now discuss that, under a suitable $h$ scaling regime of the involved parameters and quantities, DFI can use a higher effective regularization, given by the parameter $\eta_k$, than the Tikhonov regularization, controlled by the parameter $\varepsilon$, while retaining an $\mathcal{O}(h)$ defect.
  
  To ease exposition, let us consider a uniform step size $h$, hence a uniform $\eta$. 
  Suppose that the following scaling holds along the DFI trajectory,
  \begin{equation}
    \norm{J_kv_{k+1}-f_k}_H=\mathcal{O}(h),
    \qquad
    \norm{v_{k+1}}_\Theta=\mathcal{O}(1),
    \qquad
    \norm{v_{k+1}-v_k}_\Theta=\mathcal{O}(h^\rho),
  \end{equation}
  for $\rho > 0$. 
  The first condition expresses that the residual norm is of order $h$, that is the DFI velocity gives a first order approximation of the function-space velocity $f_k$. 
  The third condition, instead, quantifies the temporal coherence of the DFI velocities in the sense that a larger exponent $\rho$ means that successive velocities approach one another more rapidly as $h\to0$, which corresponds to greater coherence. 
  Furthermore, let the parameters scale with $h$ as
  \begin{equation}
    \varepsilon\asymp h,
    \qquad
    \tau\asymp h^\alpha,
  \end{equation}
  where the notation $f(h) \asymp h$ means that there exist constants $C \geq c > 0$ such that $ch \leq f(h) \leq Ch$ for all sufficiently small values of $h$.
  
  Let us first consider how the defect scales with $h$ given the scalings discussed above. A direct calculation gives
  \begin{align*}
    \Delta_k^2
     & =
    \norm{J_kv_{k+1}-f_k}_H^2
    +
    \varepsilon^2\norm{v_{k+1}}_\Theta^2
    +
    \frac{\tau^2}{h}\norm{v_{k+1}-v_k}_\Theta^2
     \in 
    \mathcal{O}(h^2)+\mathcal{O}\left(h^{2(\alpha+\rho)-1}\right).
  \end{align*}
  Thus, the condition
  \begin{equation} \label{eq:cond_alpha_rho}
    \alpha+\rho\geq\frac{3}{2}
  \end{equation}
  implies $\Delta_k\in \mathcal{O}(h)$.
  Note also that, since $h\Delta_k\in\mathcal{O}(h^2)$ and $\varepsilon^2\asymp h^2$ in this regime, the defect-control condition \eqref{eq:DFIStepSizeControl} is satisfied for an appropriate constant. 
  Hence, Proposition~\ref{thm:global_error_new} gives an $\mathcal{O}(h)$ state error.

  Importantly, while the defect is of order $h$, under the scaling above, the effective regularization $\eta$ can be larger. 
  Indeed, we have 
  \begin{equation*}
    \eta^2
    =
    \varepsilon^2+\frac{\tau^2}{h}
    \asymp
    h^2+h^{2\alpha-1},
    \quad
    1-\beta = \frac{\varepsilon^2}{\eta^2} \asymp h^{3 - 2\alpha}
    .
  \end{equation*}
  Since $\rho > 0$, condition \eqref{eq:cond_alpha_rho} allows us to choose $\alpha < 3/2$ so that 
  $\Delta_k=\mathcal{O}(h)$ while $\eta$ is asymptotically larger than $\varepsilon$.
  For example, first-order coherence $\rho=1$ together with $\tau\asymp h$ gives
  \begin{equation*}
    \varepsilon\asymp h,
    \qquad
    \eta\asymp h^{1/2},
    \qquad
    \Delta_k \in \mathcal{O}(h).
  \end{equation*}

  The effect of a larger $\eta$ can be exploited in various algorithmic ways to make the computation of an accurate approximation of the DFI velocity cheaper.
  Indeed, in Algorithm~\ref{alg:DFI},  the parameter update involves the solution of a regularized least-squares problem with associated normal equations operator $M_{\eta,k}$ introduced in \eqref{eq:beta_eta_Mk_main}.
  If $\sigma_{k,i}$ are the singular values of $J_k$, then the condition number $\kappa_k(\eta)$ and effective dimension $d_{\mathrm{eff},k}(\eta)$ of $M_{\eta,k}$, defined as
  \begin{equation}
    \kappa_k(\eta)
    =
    \frac{\sigma_{k,1}^2+\eta^2}
    {\sigma_{k,p}^2+\eta^2},
    \qquad
    d_{\mathrm{eff},k}(\eta)
    =
    \sum_i
    \frac{\sigma_{k,i}^2}
    {\sigma_{k,i}^2+\eta^2},
  \end{equation}
  both decrease as $\eta$ increases.
  Better conditioning may reduce the number of iterations required by an iterative solver. A smaller effective dimension may be exploited through reduced-rank approximations, truncation, randomized linear algebra, or other solvers adapted to the numerical spectrum. Consider randomized sketching.
  For suitable embeddings, the sketch dimension required to approximate a regularized least-squares problem can scale, up to accuracy and logarithmic factors, with its effective dimension \cite{lacotte2020effective,pmlr-v80-lin18b}.
  Since increasing $\eta$ decreases the effective dimension, it can permit a smaller and hence cheaper-to-solve sketched problem; see our numerical experiments in Section~\ref{sec:numerics}.

  In stark contrast, in Tikhonov-regularized Dirac--Frenkel dynamics, the parameter update is computed solving a least-squares problem with regularization parameter $\varepsilon$. Increasing $\varepsilon$ above the scale $\mathcal{O}(h)$ increases the value of the Tikhonov-regularized objective above the scale $\mathcal{O}(h)$, hence failing to achieve $\mathcal{O}(h)$ accuracy of the integration scheme.
}

\section{Numerical experiments}
\label{sec:numerics}
We demonstrate the DFI scheme on examples with the Allen--Cahn and Fokker-Planck equations.

\subsection{Allen--Cahn equation}
\label{sec:numexp_allen_cahn}

\subsubsection{Setup}
We consider the one-dimensional Allen--Cahn equation on the periodic domain $[0,2\pi)$,
\begin{equation}\label{eq:NumExp:AllenCahn}
    \partial_t u(t,x)
    =
    \nu \partial_{xx} u(t,x)+u(t,x)-u(t,x)^3,
    \qquad
    (t,x)\in[0,T]\times[0,2\pi),
\end{equation}
with periodic boundary conditions. Recall that \eqref{eq:NumExp:AllenCahn} is an $L^2$-gradient flow because if we set
\begin{equation*}
    V(u)
    =
    \int_0^{2\pi}
    \frac{\nu}{2}\abs{\partial_x u(x)}^2
    +
    \frac{1}{4}\left(u(x)^2-1\right)^2
    \dif x,
\end{equation*}
then $\partial_t u = -\nabla_u V(u)$. We set the viscosity parameter to $\nu=0.2$ and consider the end time $T=15$. The initial condition is
\begin{equation*}
    u^0(x)
    =
    \frac{1}{3}\tanh(2\sin x)
    -e^{-23.5\left(x-\frac{\pi}{2}\right)^2}
    +e^{-27(x-4.2)^2}
    +e^{-38(x-5.4)^2},
\end{equation*}
which is numerically periodic up to double precision.

\subsubsection{Nonlinear parametrization with neural network}\label{sec:NumExp:AC:DNNSetup}
We parametrize the approximate solution by $\hat u_k(x)=\Phi(\theta_k)(x)$,  where $\Phi(\theta)$ is a periodic fully connected feedforward neural network.  The periodicity is imposed at the input level. Namely, instead of feeding $x$ directly into the network, we first define the periodic feature vector
\[
    \phi(x)
    =
    \bigl(\phi_1(x),\ldots,\phi_{16}(x)\bigr)\in\mathbb R^{16},
    \qquad
    \phi_j(x)
    =
    \cos\left(\frac{2\pi x}{L}+s_j\right),
    \qquad
    L=2\pi,
\]
where the shifts $s_j$ are fixed. Since $L=2\pi$, these features satisfy $\phi_j(x+L)=\phi_j(x)$, and therefore every function obtained by composing a standard feedforward network with $\phi(x)$ is $L$-periodic. The network has five hidden layers, each of width $15$, and uses the swish activation applied componentwise. The trainable parameters are the entries of the weight matrices, biases, output weights, and output bias. Their total number is $p = 1231$. Hence the parameter space is $\Theta=\mathbb R^{1231}$.

\subsubsection{Setup of DFI scheme}
We use the $L^2(0,2\pi)$ inner product in space. In the numerical experiments this inner product is approximated on the uniform periodic grid $x_0, \dots, x_{N_x - 1}$ with $N_x = 450$ points, using
\begin{equation*}
    \norm{w}_{2,N_x}^2
    =
    \frac{1}{N_x}
    \sum_{i=0}^{N_x-1}\abs{w(x_i)}^2.
\end{equation*}
In the empirical least-squares solves, both $J_k$ and $f_k$ are evaluated on this grid using JAX's automatic differentiation to compute all derivatives involved. The semi-implicit Euler DFI update is executed as described in Algorithm~\ref{alg:DFI}.
The initialization parameter $\theta_0$ is computed by fitting the initial condition with the Adam optimizer run for $5 \times 10^4$ full batch iterations on the same grid, while the initial velocity is set to $v_0=0$. Furthermore, we use a fixed time step $h$ so that
\[
    t_k=kh,
    \qquad
    k=0,\ldots,N_t,
    \qquad
    N_t=\frac{T}{h}.
\]

We also consider the left-sketched version of DFI and Tikhonov-regularized Dirac--Frenkel dynamics, where the correction is obtained by solving
\begin{align}
    z_{k+1}
     & =
    \arg\min_{z\in \R^p}
    \norm{
        S_k \sqrt{W} \left(J_kz-(f_k-\beta J_kv_k)\right)
    }_2^2
    +
    \eta^2\norm{z}_{\R^p}^2,
    \label{eq:dfi_spat_disc_left_sketch}
    \\
    v_{k+1}
     & =
    \beta v_k+z_{k+1}\,,
\end{align}
where $W=\mathrm{diag}(1/N_x,\dots,1/N_x)$ denotes the quadrature weights matrix, and $S_k\in\R^{s\times N_x}$ is a left-sketch matrix, $s \leq N_x$, such that $\mathbb{E}[\|Sw\|_2^2]= \|w\|_2^2$.

The reference solution, denoted by $u_k$, is computed to near machine precision with a Fourier spectral discretization in space and fourth-order Runge--Kutta in time with step size $h_{\mathrm{ref}}=2.5\cdot 10^{-4}$. We regard this trajectory as the true solution for reporting error diagnostics. We report the pointwise relative error given by
\begin{equation}
    e_k
    =
    \frac{
        \norm{\hat u_k-u_k}_{2,N_x}
    }{
        \norm{u_k}_{2,N_x}
    }.\label{eq:NumExp:PointwiseError}
\end{equation}
and the integrated relative error,
\begin{equation*}
    E
    =
    \left(
    \frac{
        \sum_{k=0}^{N_t-1}h\,
        \norm{\hat u_k-u_k}_{2,N_x}^2
    }{
        \sum_{k=0}^{N_t-1}h\,
        \norm{u_k}_{2,N_x}^2
    }
    \right)^{1/2}.
\end{equation*}

\begin{figure}
    \centering
    \begin{tabular}{cc}
        \includegraphics[width=0.55\textwidth]{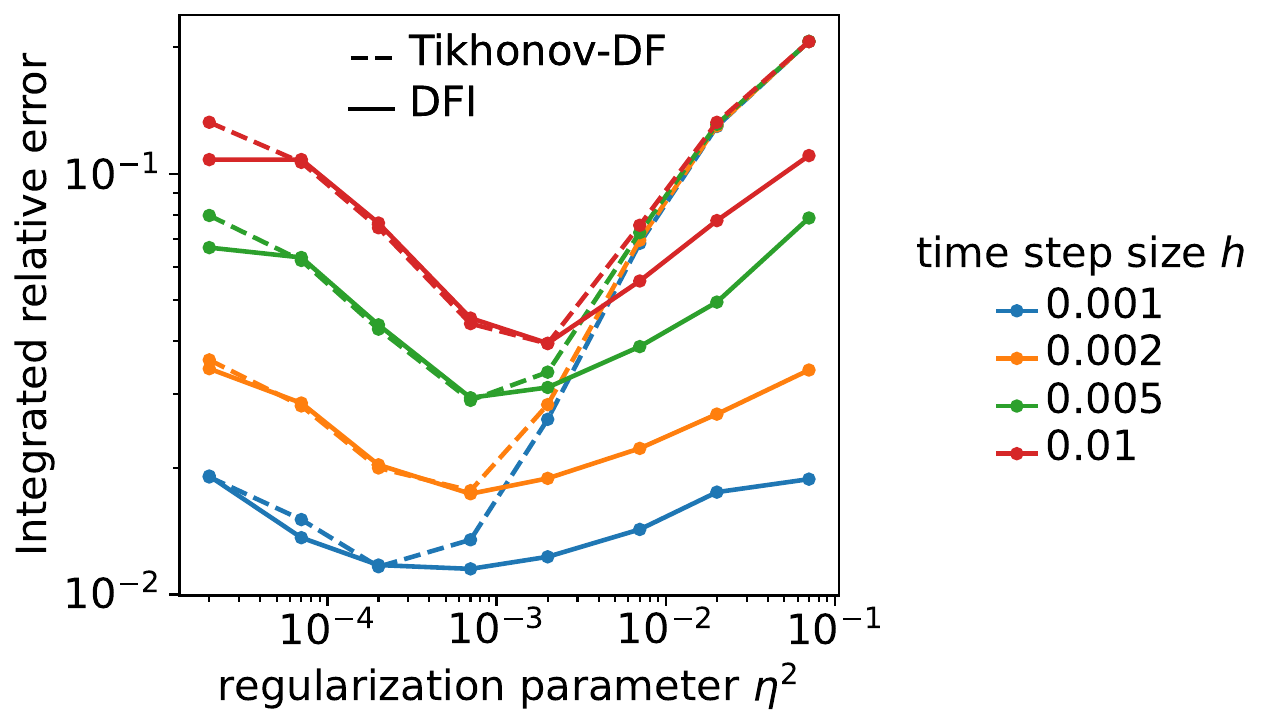} &
        \includegraphics[width=0.35\textwidth]{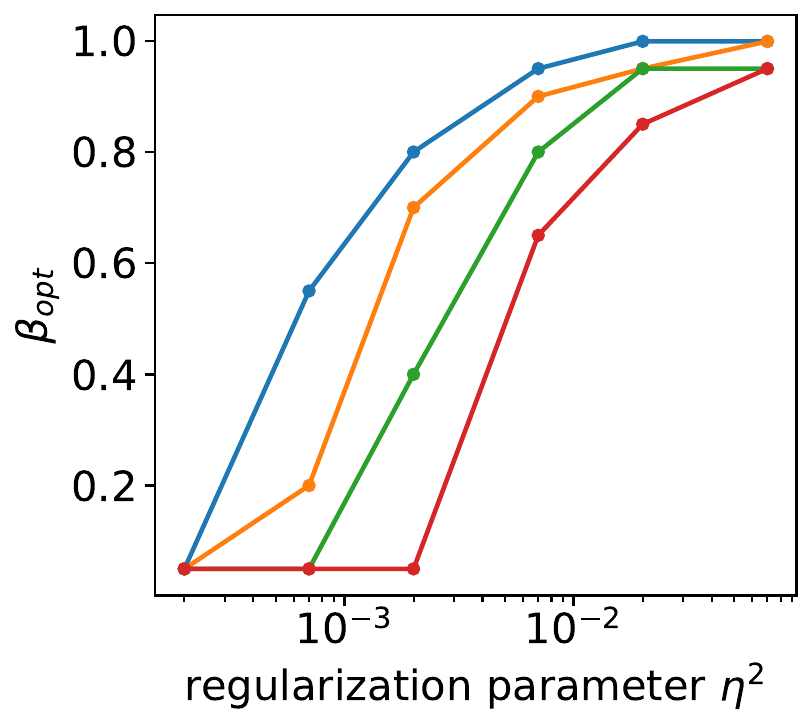}                                                                \\
        \scriptsize (a) integrated relative error                                      & \scriptsize (b) optimal memory parameter $\beta_{\text{opt}}$
    \end{tabular}
    \caption{DFI uses memory to compensate for loss of information in the instantaneous regularized least-squares solve. Plot (a) shows that as $\eta^2$ increases, Tikhonov-DF deteriorates because the velocity is computed from an increasingly regularized local problem alone, while DFI remains accurate by retaining more of the previous velocity. Plot (b) provides more evidence of this mechanism, showing that the error-minimizing $\beta_{\text{opt}}$ increases with $\eta^2$, meaning that more previous-velocity information is used when the instantaneous least-squares signal is more strongly regularized} \label{fig:allen_cahn_interr_and_beta_opt_vs_eta2}
\end{figure}

\begin{figure}
    \centering
    \begin{tabular}{cc}
        \includegraphics[width=0.55\textwidth]{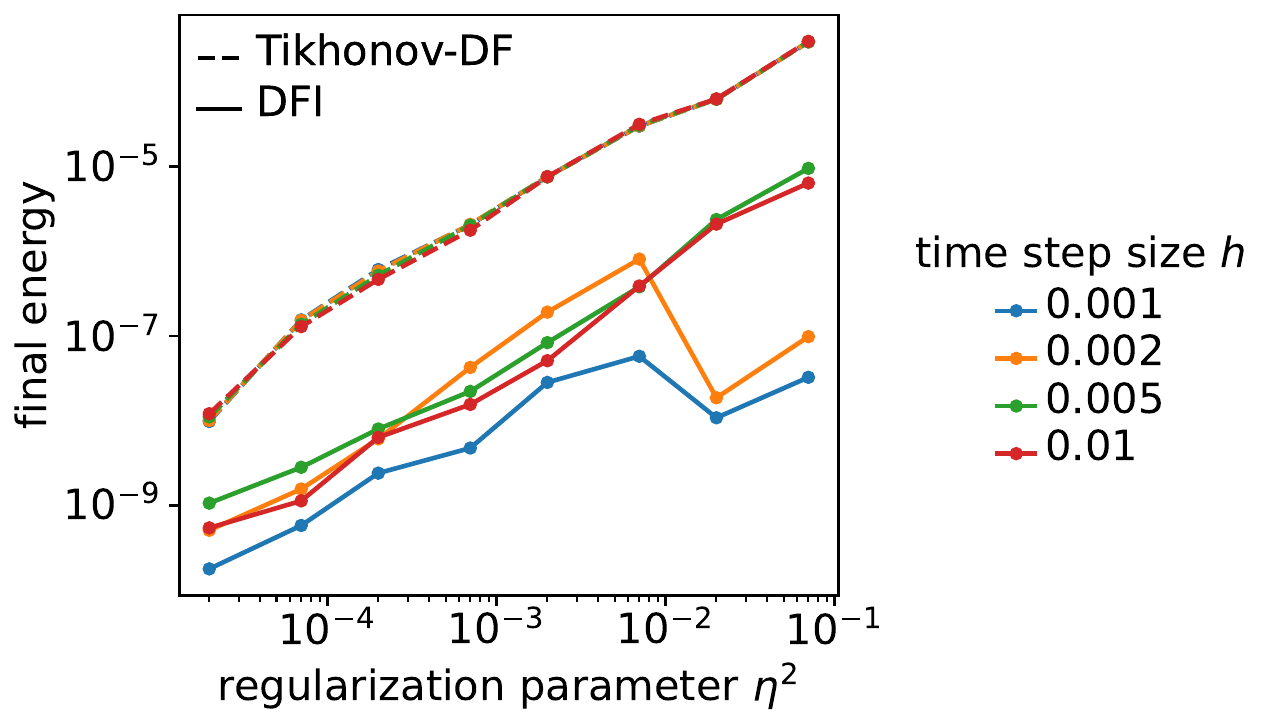} &
        \includegraphics[width=0.38\textwidth]{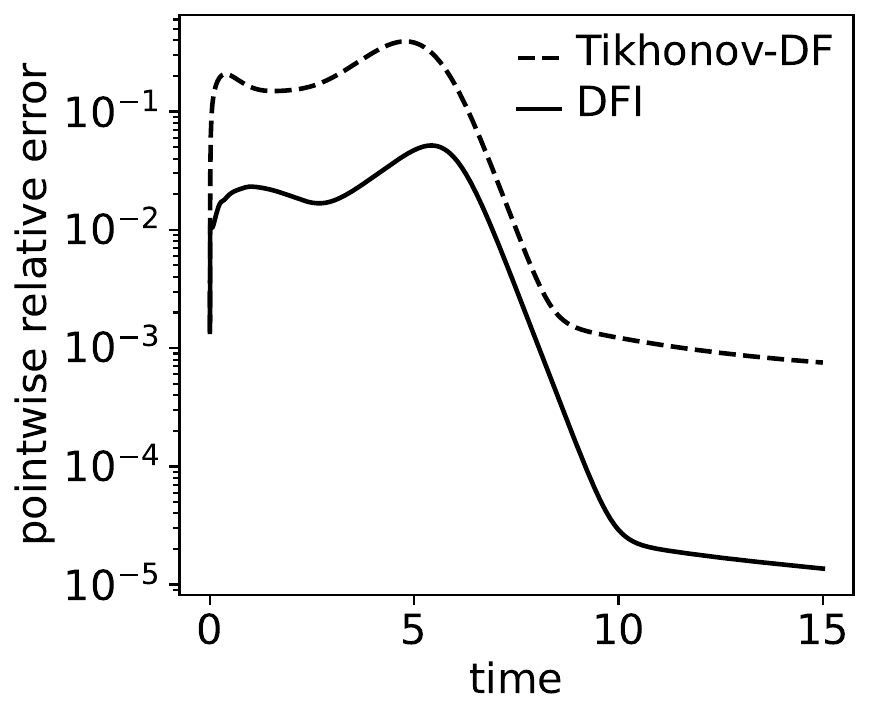}                                                                                   \\
        \scriptsize (a) final energy                                                                         & \scriptsize (b) pointwise error for $\eta^2 = 2 \times 10^{-2}$
    \end{tabular}
    \caption{Plot (a) shows that DFI reaches a smaller final energy than Tikhonov-DF across regularization strengths, indicating that the inertial dynamics better follow the long-time energy decay in this example. Plot (b) shows that this improvement is not only a final-time effect. After the initial transient, DFI also gives smaller pointwise errors along the trajectory.}
    \label{fig:allen_cahn_final_energy_gap_vs_eta2}
\end{figure}

\subsubsection{Regularization parameters}\label{sec:NumExp:AllenCahn:Parameters}
For the numerical experiments, we directly select $\beta$ and $\eta$ because they are interpretable as the memory and effective regularization parameter. We recall that, for a time-step size $h$, these parameters correspond to the  parameters used in Algorithm~\ref{alg:DFI} through $\tau^2=\beta\eta^2h$ and $\epsilon^2=(1-\beta)\eta^2$.
We compare DFI to Tikhonov-regularized Dirac--Frenkel dynamics, which corresponds to the same discrete-time update with formally setting $\beta=0$ in Algorithm~\ref{alg:DFI}. Thus, for both schemes, we need to select $\eta$ but for DFI we additionally need to select $\beta$. While this is an additional parameter to select for DFI, we note that $\beta$ has to be in the range $(0, 1)$ and that it is directly interpretable as controlling how much memory is taken into account. We will explore the effect of $\beta$ in the following experiments.

\subsubsection{Benefit of inertia: robustness when the local Jacobian information is limited} \label{sec:NumExp:JacobianInertiaAllenCahn}
The DFI scheme remains accurate even when the instantaneous least-squares problem built from the instantaneous Jacobian $J_k$ provides only limited reliable information about the next velocity. As we discussed earlier, this can happen because $J_k$ is ill-conditioned or nearly rank-deficient, so some parameter directions are only weakly visible in the tangent vector $J_kv$. Additionally, the least-squares problem may be too strongly  regularized locally, which suppresses the correction obtained from the instantaneous Jacobian. We also note a third case, which is that the residual may be evaluated only through a left sketch $S_kJ_k$ as in \eqref{eq:dfi_spat_disc_left_sketch}, so the update uses a compressed approximation of the least-squares problem, which can reduce computational costs  \cite{berman2023randomized,dong2025randomizedtimesteppingnonlinearly}. In all three cases, the instantaneous local solve contains less usable information about the velocity that should be taken in parameter space. In Tikhonov-DF, weakly informed velocity directions are shrunk by the regularized instantaneous solve. By contrast, DFI combines the information available from the instantaneous Dirac--Frenkel residual with the history given by the inertia of the parameter velocity.

This mechanism explains the behavior we see in  Figure~\ref{fig:allen_cahn_interr_and_beta_opt_vs_eta2}. The left plot shows that DFI is more robust than Tikhonov-DF as the regularization strength $\eta^2$ increases. For small and moderate values of $\eta^2$, the two methods give comparable integrated errors. When $\eta^2$ becomes large, the correction obtained from the instantaneous least-squares problem is strongly penalized. In this regime, the Tikhonov-DF error grows rapidly because Tikhonov-DF recomputes a shrunk velocity from scratch at each step. DFI, on the other hand, remains accurate over a wider range of $\eta^2$ because the transported velocity $\beta v_k$ supplies information that is not obtained from the instantaneous regularized correction alone. The right plot in Figure~\ref{fig:allen_cahn_interr_and_beta_opt_vs_eta2} provides further evidence that this is consistent with this interpretation: For this we note that the parameter $\beta$ has been selected via a grid search over the values $\{0.05,0.1,\dots,0.95,0.99\}$ to minimize the error on a reference solution for a given $\eta^2$. The plot shows that the optimal value of $\beta$ to minimize the integrated error $E$ increases with $\eta^2$. Thus, when the correction from the instantaneous Jacobian is more strongly regularized, the best DFI trajectory compensates by retaining more of the previous velocity. This matches the direction-wise interpretation from \Cref{sec:DFI:Interpretation}. Directions that are well resolved by $J_k$ are updated using the instantaneous Dirac--Frenkel residual, while weakly resolved or strongly damped directions can be carried forward by inertia instead of being instantaneously suppressed.

The gradient-flow diagnostics in Figure~\ref{fig:allen_cahn_final_energy_gap_vs_eta2} show the same behavior from the perspective of the energy. Across different regularization strengths, DFI reaches a smaller final energy than Tikhonov-DF, suggesting that the inertial dynamics help preserve the long-time relaxation structure of the Allen--Cahn flow.
The pointwise-in-time comparison in Figure~\ref{fig:allen_cahn_final_energy_gap_vs_eta2} shows that the improvement is not only a final-time effect but DFI also reduces the pointwise relative error after the initial transient.

\begin{SCfigure}
    \centering
    \includegraphics[width=0.58\textwidth]{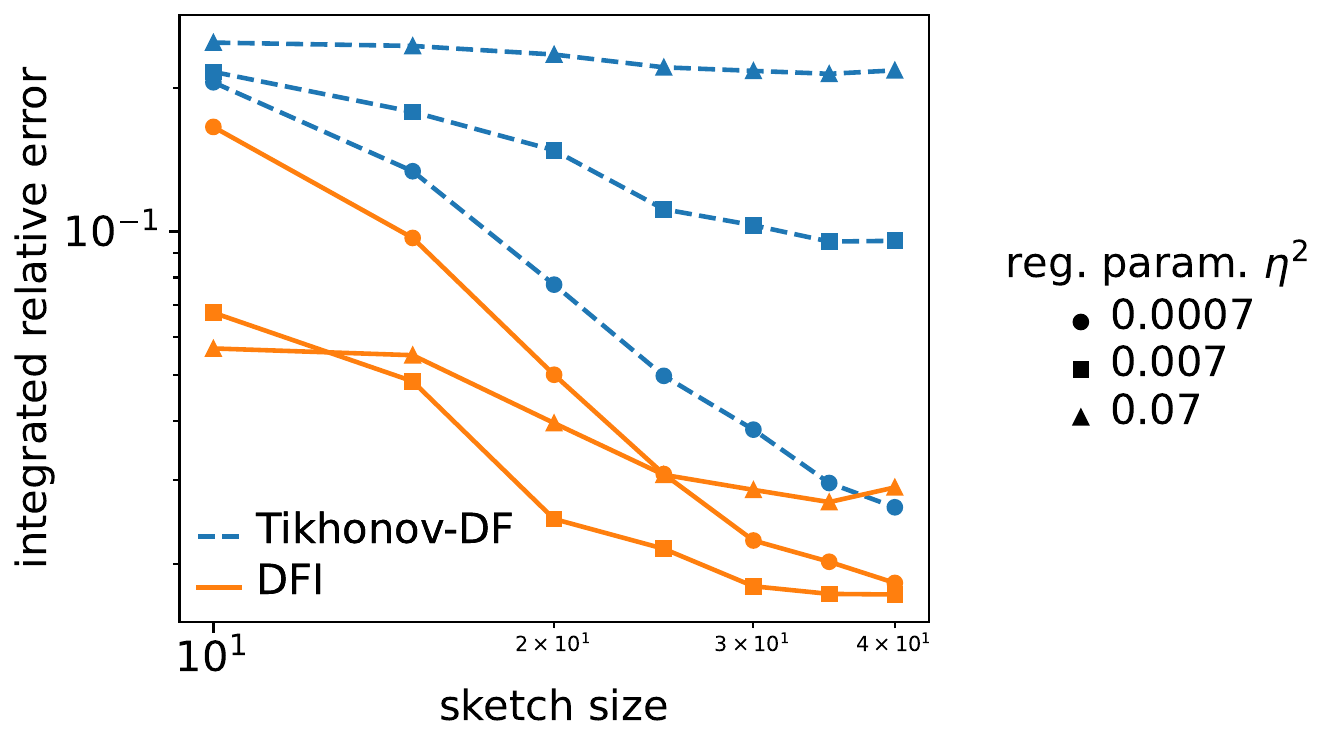}
    \caption{As the sketch size $s$ decreases, the local solve uses less information from the residual, so Tikhonov-DF deteriorates because it relies entirely on the instantaneous sketched least-squares problem. DFI remains more accurate for small sketch sizes because the velocity history supplements the information missing from the instantaneous sketched solve.}
    \label{fig:allen_cahn_interr_vs_sketch_size}
\end{SCfigure}

\subsubsection{DFI is more robust under sketching}
A smaller sketch size $s$ reduces the cost of the least-squares solve, but it also means that the method uses less information from the residual. Since Tikhonov-DF relies entirely on this instantaneous sketched solve, its accuracy is more sensitive to a small sketch size. DFI is less sensitive because it also uses the velocity from the past. Thus, DFI can maintain accuracy even when the local least-squares problem is made less informative in order to reduce cost. The results in Figure~\ref{fig:allen_cahn_interr_vs_sketch_size} demonstrate this effect. For small sketch sizes, DFI achieves lower integrated errors than Tikhonov-DF. Thus the inertial memory is not merely a qualitative difference in the dynamics; it leads to a computational benefit, allowing one to use cheaper sketched Dirac--Frenkel updates with less loss of accuracy. Note that the parameter $\beta$ has been selected as in  \Cref{sec:NumExp:JacobianInertiaAllenCahn}.

\subsubsection{Inertia helps, but excessive memory hurts}
The results in Figure~\ref{fig:allen_cahn_interr_vs_beta} show the expected tradeoff in the memory parameter $\beta$ for a fixed $\eta^2 = 2 \times 10^{-4}$. Increasing $\beta$ allows DFI to retain more information from the previous velocity, which is the main mechanism behind its robustness. However, taking $\beta$ too close to one makes the method insufficiently responsive to the instantaneous Dirac--Frenkel residual. Then inaccurate or outdated velocity components can persist for too long, and the integrated error grows. \tc{This behavior is consistent with the defect-based perspective of \Cref{sec:error_analysis}. If the anchor velocity becomes outdated, the residual  and hence the defect can grow. Moreover, at fixed effective regularization parameter $\eta$, increasing $\beta$ decreases $\varepsilon^2=(1-\beta)\eta^2$ and therefore makes the defect-control condition \eqref{eq:DFIStepSizeControl} increasingly restrictive.}

\begin{SCfigure}[1.0][t]
    \centering
    \includegraphics[width=0.48\textwidth]{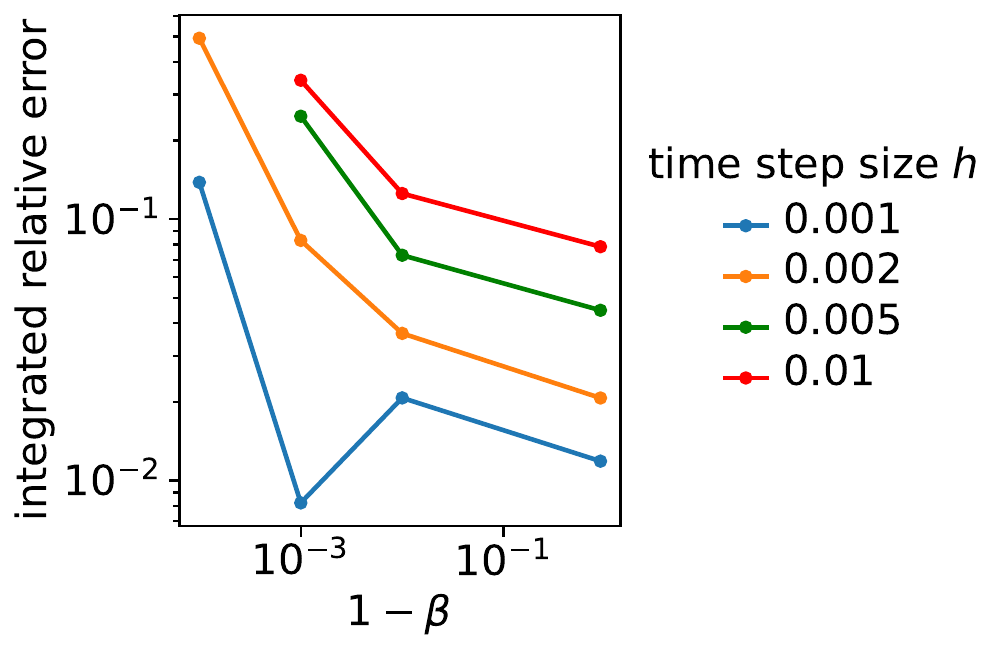}
    \caption{As $\beta$ approaches one, equivalently as $1-\beta$ becomes small, the transported velocity dominates and the instantaneous least-squares correction has too little influence. \tc{In this regime, outdated velocity information can increase the instantaneous residual and hence the augmented defect, while the decrease of $\varepsilon^2=(1-\beta)\eta^2$ makes the defect-control condition more restrictive.}}
    \label{fig:allen_cahn_interr_vs_beta}
\end{SCfigure}

\subsection{High-dimensional Fokker--Planck equation}
\label{sec:numexp_fokker_planck}

\subsubsection{Setup}
We consider a Fokker--Planck equation for a probability density
$u(t,\cdot)$ on $\R^d$, with $d=10$,
\begin{equation}
    \begin{aligned}
        \partial_t u(t,x) =
        -\nabla_x\cdot\left(u(t,x)b(t,x)\right)
        +
        D\Delta_x u(t,x),
        \qquad
        x\in\R^d,
        \quad
        t\in[0,T].
    \end{aligned}
    \label{eq:NumExp:FokkerPlanck}
\end{equation}
Here $b(t,x)\in\R^d$ is the drift field and $D>0$ is the diffusion coefficient. The equation describes the evolution of the probability density of the
stochastic differential equation
\begin{equation}
    \dif X_i(t)
    =
    b_i(t,X(t))\,\dif t
    +
    \sqrt{2D}\,\dif W_i(t),
    \qquad
    i=1,\ldots,d.
    \label{eq:fpe_sde}
\end{equation}
We set $D=10^{-2}$ and integrate until $T=2$.

The drift describes an interacting anharmonic trap \cite{bruna_neural_2024}. Its
components are
\begin{equation*}
    b_i(t,x)
    =
    (a(t)-x_i)^3+\alpha(\bar x-x_i),
    \qquad
    \bar x=\frac1d\sum_{j=1}^d x_j,
    \qquad
    i=1,\ldots,d,
\end{equation*}
where
\begin{equation*}
    a(t)=1.25(\sin(\pi t)+1.5),
    \qquad
    \alpha=-0.5.
\end{equation*}
The cubic term gives a nonlinear restoring force centered at $a(t)$, while the mean-field term couples each coordinate to the empirical mean $\bar x$. The initial density is a Gaussian density. On the computational box, we evaluate
it using the wrapped displacement
\begin{equation*}
    \delta_i(x,\mu)
    :=
    \left(x_i-\mu_i+\frac{L}{2}\right)\bmod L-\frac{L}{2},
    \qquad
    L=4,
\end{equation*}
and set
\begin{equation*}
    u^0(x)
    =
    (2\pi\sigma^2)^{-d/2}
    \exp\left(
    -\frac{1}{2\sigma^2}
    \norm{\delta(x,\mu^0)}_2^2
    \right),
    \qquad
    \sigma^2=0.1.
\end{equation*}
The initial mean is placed along a line in the coordinate index,
\begin{equation*}
    \mu_i^0
    =
    1.46+1.27\frac{i-1}{d-1},
    \qquad
    i=1,\ldots,d.
\end{equation*}

Although the equation is posed on $\R^d$, the computation is carried out on the box $[0,4)^d$. The box is chosen large enough to contain all probability mass up to double precision over the time interval considered.

We note that the right-hand side of \eqref{eq:NumExp:FokkerPlanck} depends on time so that the stated Fokker--Planck equation defines a non-autonomous evolution problem. Although the well-posedness and a posteriori error analysis developed above are formulated for autonomous systems, the numerical results below indicate that the DFI scheme exhibits analogous behavior in this non-autonomous setting.

\subsubsection{Nonlinear parametrization with neural network}
Since the unknown is a probability density, we enforce positivity directly in the parametrization by setting
\begin{equation*}
    \Phi(\theta)(x)=\exp(-\varphi(\theta)(x)).
\end{equation*}
Here $\varphi(\theta)$ is a periodic fully connected feedforward neural network; see \Cref{sec:NumExp:AC:DNNSetup}. This makes the neural representation periodic on the computational box $[0,4)^d$. The network $\varphi(\theta)$ has four hidden layers, each of width $64$, and
uses the swish activation applied componentwise. The trainable parameters are the entries of the weight matrices, biases, output weights, and output bias. In dimension $d=10$, this architecture gives $p = 18625$ trainable parameters.

\subsubsection{Setup of the DFI scheme}
The empirical least-squares problem is formed from importance samples on the computational box. At each time step, we draw
\begin{equation*}
    \mathcal X_k=\{x_\ell\}_{\ell=1}^{N_s}\subset[0,4)^d,
    \qquad
    N_s=2000,
\end{equation*}
from an adaptive mixture proposal. The proposal consists of $1500$ samples from a Gaussian fitted to the current approximation $\hat u_k$ and $500$ samples from the uniform distribution on $[0,4)^d$. Let $q_k$ denote this proposal density. The empirical norm used in the least-squares problem is the importance-sampling approximation of the
$L^2([0,4)^d)$ norm,
\begin{equation}\label{eq:NumExp:FEP:EmpNorm}
    \norm{w}_{2,\mathcal X_k}^2
    =
    \frac{1}{N_s}
    \sum_{\ell=1}^{N_s}
    \frac{\abs{w(x_\ell)}^2}{q_k(x_\ell)}.
\end{equation}
The semi-implicit Euler DFI update is then conducted as in Algorithm~\ref{alg:DFI} with the empirical norm \eqref{eq:NumExp:FEP:EmpNorm} and the Gaussian fitted to the current solution $\hat{u}_k$ at each time step.
We use a time step size of $10^{-3}$ and keep $\beta$ and $\eta$ fixed over all time
steps.
We initialize the parameter variable to $\theta_0$ computed by fitting the initial condition with the Adam optimizer for $2 \times 10^{5}$ full-batch iterations. To ensure our results are not polluted by the initial error, we perform this fit using a finer sample of size $2 \times 10^{5}$, half being drawn from the Gaussian initial condition itself and the other half from the uniform measure, and using importance sampling to estimate the $L^2$-error. The velocity variable is initialized to $v_0=0$.

We compare DFI and Tikhonov-DF using moment diagnostics. The reference moments $\mu_k^{\mathrm{ref}}$ and $\Sigma_k^{\mathrm{ref}}$ are computed from the $10^5$ reference particles from the SDE \eqref{eq:fpe_sde} with the Euler--Maruyama method and time step $10^{-4}$. The moments $\widehat\mu_k$ and $\widehat\Sigma_k$ are computed by importance sampling from the same adaptive proposal used for evaluation. The weights are self-normalized using the ratio between the approximate density $\hat u_k$ and the proposal density $q_k$. Because the computation is carried out on a periodic box, the mean is estimated
coordinate-wise by circular averaging after mapping each coordinate from
$[0,4)$ to the unit circle. The covariance is then computed from wrapped
displacements around this circular mean. We report the pointwise relative errors
\begin{equation}
    e_{\mu,k}
    =
    \frac{
        \norm{\widehat\mu_k-\mu_k^{\mathrm{ref}}}_2
    }{
        \norm{\mu_k^{\mathrm{ref}}}_2
    },
    \qquad
    e_{\Sigma,k}
    =
    \frac{
        \norm{
            \operatorname{diag}(\widehat\Sigma_k)
            -
            \operatorname{diag}(\Sigma_k^{\mathrm{ref}})
        }_2
    }{
        \norm{
            \operatorname{diag}(\Sigma_k^{\mathrm{ref}})
        }_2
    }\,,
    \label{eq:NumExp:FPEPointwiseMomentErrors}
\end{equation}
and the integrated moment errors,
\begin{equation*}
    E_\mu
    =
    \left(
    \frac{
        \sum_{k=0}^{N_t-1}h\,
        \norm{\widehat\mu_k-\mu_k^{\mathrm{ref}}}_2^2
    }{
        \sum_{k=0}^{N_t-1}h\,
        \norm{\mu_k^{\mathrm{ref}}}_2^2
    }
    \right)^{1/2},
\end{equation*}
and
\begin{equation*}
    E_\Sigma
    =
    \left(
    \frac{
        \sum_{k=0}^{N_t-1}h\,
        \norm{
            \operatorname{diag}(\widehat\Sigma_k)
            -
            \operatorname{diag}(\Sigma_k^{\mathrm{ref}})
        }_2^2
    }{
        \sum_{k=0}^{N_t-1}h\,
        \norm{
            \operatorname{diag}(\Sigma_k^{\mathrm{ref}})
        }_2^2
    }
    \right)^{1/2}.
\end{equation*}

\subsubsection{Results}
We use this high-dimensional Fokker--Planck example to demonstrate that the inertial memory in DFI improves robustness in a more challenging nonlinear parametrization. For both DFI and Tikhonov-DF, the regularization parameters are tuned for best performance with respect to the mean error via grid search, which is analogous to the procedure used in \Cref{sec:numexp_allen_cahn}. In DFI, this means tuning both the memory parameter $\beta$ and the regularization strength $\eta^2$. In Tikhonov-DF, we tune the corresponding Tikhonov regularization parameter.

At each time step, the residual is evaluated at $N_s=2000$ sample points from the adaptive mixture proposal. Thus the instantaneous Dirac--Frenkel signal is randomized. The results in Figure~\ref{fig:fokker_planck_interror} show that DFI gives a more robust time evolution of the moment diagnostics in this regime. For both the mean and the diagonal covariance, Tikhonov-DF initially follows the reference dynamics, but the error grows sharply after a short time. DFI avoids this error growth and maintains substantially smaller moment errors over the time interval.
Furthermore, Figure \ref{fig:fokker_planck_interror_vs_sketch} indicates that the DFI dynamics are numerically less sensitive to the reduced local information in small sketches than Tikhonov-DF, consistently with the behavior observed in the Allen--Cahn example. Again, this robustness can be attributed to the fact that the corresponding least-squares formulation depends less strongly on localized information because of the memory term.
Overall, these results agree with the trends previously observed for the Allen--Cahn problem.

\begin{figure}[t]
    \centering
    \includegraphics[width=1.0\textwidth]{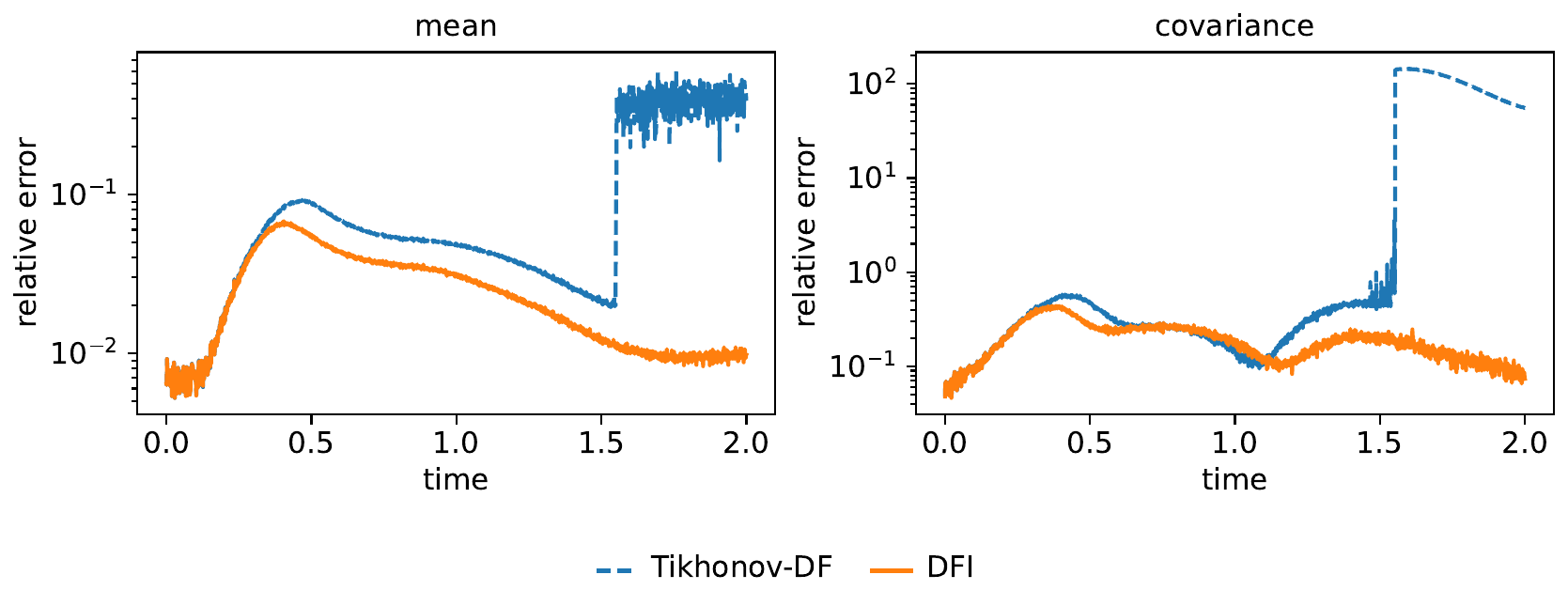}
    \caption{Fokker--Planck in 10D: DFI is more robust in the sample-based high-dimensional Fokker--Planck experiment. The instantaneous least-squares problems are formed using only $N_s=2000$ sample points. The regularization parameters are set to $\eta^2=10^6$ and $\beta = 0.09$. Tikhonov-DF develops large pointwise errors in the mean and diagonal covariance after a short time, whereas DFI maintains smaller errors over the time interval. Curves are averaged over five independent runs.}
    \label{fig:fokker_planck_interror}
\end{figure}

\begin{figure}[t]
    \centering
    \includegraphics[width=1.0\textwidth]{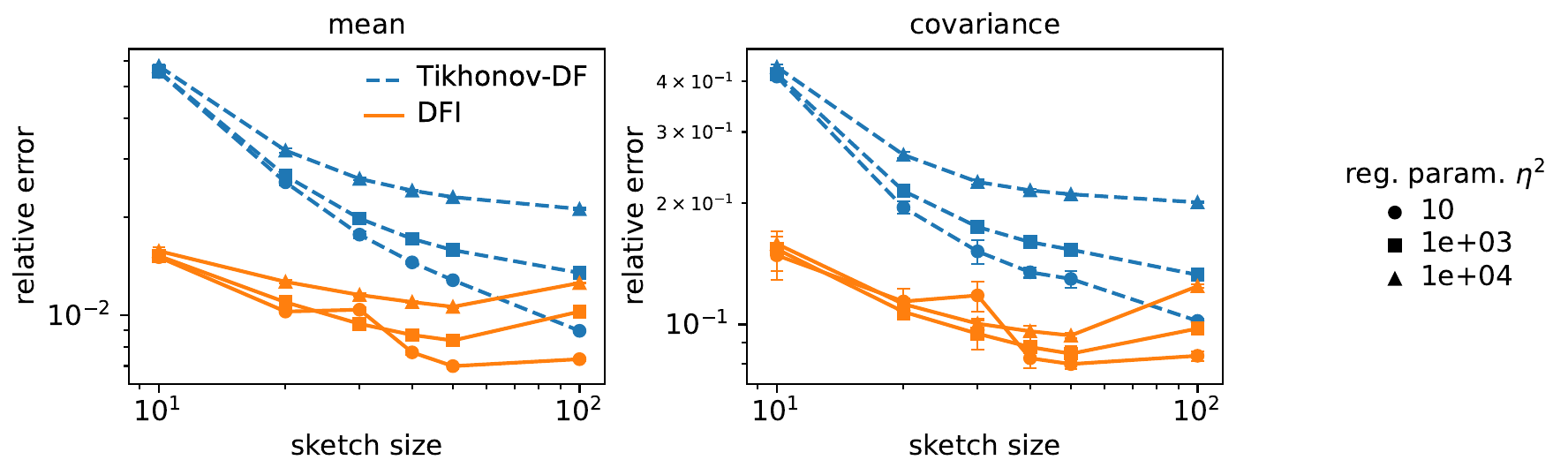}
    \caption{Fokker--Planck in 10D: DFI is more robust to smaller sketch size due to the memory term supplementing information from past solves. Results are averaged over five independent runs.}
    \label{fig:fokker_planck_interror_vs_sketch}
\end{figure}

\section{Conclusions}
By adding inertia to the Dirac--Frenkel dynamics, the proposed DFI scheme allows parameter velocity information from the past trajectory to persist in directions that are weakly informed by the instantaneous Jacobian. This mechanism is useful precisely in the regimes where the current Jacobian does not provide enough reliable information about all parameter velocity directions. Such situations occur when the parametrization is redundant, when the Jacobian is ill-conditioned or nearly rank deficient, when the least-squares problem is strongly regularized, or when the Jacobian is sketched. In these cases, Tikhonov-regularized Dirac--Frenkel dynamics suppress weakly informed directions instantaneously. DFI instead allows velocity in such directions to persist, while still using the current Dirac--Frenkel residual to correct directions that are well resolved by the Jacobian. 

We established well-posedness of the continuous DFI system and introduced a semi-implicit Euler discretization, which requires one regularized least-squares solve per time step, with the previous velocity appearing as an anchor. \tc{For the time-discrete scheme, we derived a posteriori bounds in which the state error is controlled by augmented defects combining the residual, a Tikhonov velocity term, and an inertial velocity-increment term, under a defect-control restriction at the Tikhonov scale $\varepsilon^2$. 
Our analysis suggests that, when the residual remains small and the DFI velocities are temporally coherent, the effective regularization $\eta$ can be chosen larger than $\varepsilon$ while retaining accuracy of the integration scheme, potentially reducing the cost of the instantaneous corrections.
The provided numerical experiments on the Allen-Cahn and Fokker-Planck equations demonstrate that DFI can remain accurate with stronger regularization and with more aggressively sketched least-squares solves.}

\tc{ 
\section*{Acknowledgments}
The authors would like to thank Fabio Nobile for helpful discussions leading to the improvement of the a posteriori bounds of the time-discrete scheme and the manuscript as a whole.
}

\bibliography{biblio}

\end{document}